\providecommand{\U}[1]{\protect\rule{.1in}{.1in}}
\newcommand{\dt}{\Delta{t}}
\newcommand{\Div}[1]{\nabla \cdot #1}
\newcommand{\Grad}[1]{\nabla{#1}}
\newcommand{\norm}[2][]{||#2||_{#1}}
\DeclareMathOperator*{\argmax}{arg\,max}
\newtheorem{algorithm}{Algorithm}
\newtheorem{assumption}{Assumption}
\begin{document}
	
	\title{An embedded variable step IMEX scheme for the incompressible Navier-Stokes equations} 
	\author{
	{\sc Victor DeCaria\thanks{Oak Ridge National Laboratory, Oak Ridge, TN, USA. decariavp@ornl.gov. Partially supported by NSF grant DMS 1817542. This manuscript has been authored, in part, by UT-Battelle, LLC under Contract No. DE-AC05-00OR22725 with the U.S. Department of Energy. The United States Government retains and the publisher, by accepting the article for publication, acknowledges that the United States Government retains a non-exclusive, paid-up, irrevocable, world-wide license to publish or reproduce the published form of this manuscript, or allow others to do so, for United States Government purposes. The Department of Energy will provide public access to these results of federally sponsored research in accordance with the DOE Public Access Plan (http://energy.gov/downloads/doe-public-access-plan).} }	
	{\sc and}\\[6pt]
	{\sc Michael Schneier\thanks{University of Pittsburgh, Pittsburgh, PA, USA. mhs64@pitt.edu}.}}
	\maketitle
	
\begin{abstract}
{This report presents a series of implicit-explicit (IMEX) variable timestep algorithms for the incompressible Navier-Stokes equations (NSE). With the advent of new computer architectures there has been growing demand for low memory solvers of this type. The addition of time adaptivity improves the accuracy and greatly enhances the efficiency of the algorithm. We prove energy stability of an embedded first-second order IMEX pair. For the first order member of the pair, we prove stability for variable stepsizes, and analyze convergence. We believe this to be the first proof of this type for a variable step IMEX scheme for the incompressible NSE. We then define and test a variable stepsize, variable order IMEX scheme using these methods. Our work contributes several firsts for IMEX NSE schemes, including an energy argument and error analysis of a two-step, variable stepsize method, and embedded error estimation for an IMEX multi-step method.}
{Variable Step BE-AB2 Scheme; Implicit/Explicit; IMEX; Navier-Stokes; Time Filters}
\end{abstract}

\section{Introduction}
 Time accuracy is critical for obtaining physically relevant solutions in the field of computational fluid dynamics (CFD). Many flow solvers use constant timesteps, but there has been an expanding interest in variable step solvers \cite{JR10,KGGS10,BR12}. These methods allow for larger time steps for intervals of the simulation where the physics are stable, while allowing for smaller time steps for portions which are physically interesting. This allows for a decrease in the computational cost of the solver, while simultaneously increasing the accuracy.  

In this paper we focus on introducing several new implicit-explicit (IMEX) adaptive time stepping schemes for the incompressible Navier-Stokes equations (NSE). Methods of this type are known to be inexpensive per timestep, but often have a severe timestep restriction due to the explicit treatment of the nonlinear term. As solvers have matured and memory increased, methods of this type have seen decreased development. However, with the recent explosion of interest in uncertainty quantification and machine learning, along with newly emerging computational architectures, methods requiring less spatial, communication and computational complexity have become interesting tools again. Additionally, a prominent feature of these schemes is at each timestep they require the solution of a shifted Stokes problem. Therefore, these IMEX schemes stand to leverage recent developments of GPU solvers for the Stokes equations \cite{Z14}.

The scheme has an embedded structure, so that no additional Stokes solves or function evaluations are required to compute the second order method once the first order method is computed. This is done with an easy to implement and efficient time filter as follows. Let $u^{n}$ be a velocity approximation at $T = \Delta t n$. If $u^{n+1}$ is calculated with implicit Euler, then a second order approximation can be constructed by resetting $u^{n+1}$ with
\begin{equation}\notag
u^{n+1} \Leftarrow u^{n+1} - \frac{1}{3}(u^{n+1} - 2u^n+ u^{n-1}) \hspace{10mm} \text{(Constant stepsize timefilter}).
\end{equation}
We summarize the main contributions of this paper:
\begin{enumerate}
\item A full stability and error analysis for a first order, two step variable stepsize backward Euler - Adams Bashorth 2 (VSS BE-AB2) timestepping scheme. To our knowledge this is the first provable stability and convergence result for a two step IMEX method applied to the incompressible NSE.
\item Using a time filter, we embed a variable stepsize second order scheme into the VSS BE-AB2 algorithm, which we call VSS BE-AB2+F. We prove energy stability for the constant timesteps.
\item We combine these methods to make a variable stepsize variable order scheme, which we call multiple order, one solve, embedded IMEX - 12 (MOOSE-IMEX-12).
\end{enumerate}

These results reduce the gap between the needs of practical CFD and what analysis can contribute. A full analysis of VSS BE-AB2+F and MOOSE-IMEX-12 remains an open problem. However, numerical experiments conducted in Section \ref{sec:tests} are promising.

The paper is organized as follows. In Section \ref{sec:preliminaries}, we present preliminary analysis which will be needed in the ensuing sections. The stability and error analysis of the first order member of MOOSE-IMEX-12 is contained in Section \ref{sec:first_order}. The variable stepsize, second order method member of MOOSE-IMEX-12 is discussed in Section \ref{sec:second_order}. The full MOOSE-IMEX-12 method is described in Section \ref{sec:vsvo}. We confirm the predicted convergence rates on constant stepsize and adaptive tests in Section \ref{sec:tests}. Concluding remarks are given in Section \ref{sec:conclusion}.

\subsection{Previous Works}
Variable timestep schemes have been studied extensively for linear multistep methods for ordinary different equations (ODEs); see \cite{CL84,DLN83} and the references therein. However, there is a large gap in analysis between the fully implicit methods analyzed for ODEs, and IMEX methods which are often required for partial differential equation (PDE) based applications.
Linear stability analysis for constant timestep backward differentiation formula 2 combined with Adams Bashforth 2 (BDF2-AB2) and Crank-Nicolson Leapfrog (CNLF) applied to systems of linear evolution equations with skew symmetric couplings was conducted in \cite{LC12}. It was shown under a timestep condition that both methods were long time energy stable.  
Recently, for the NSE adaptive time stepping schemes have been studied for a variety of second order implicit and linearly implicit methods \cite{JR10,KGGS10,layton2020analysis,BR12}. It was demonstrated that time adaptivity increased the accuracy and efficiency of the schemes. A stability analysis of these methods for increasing and decreasing timestep ratio is still an open problem. Constant timestep  IMEX schemes for the NSE have been studied for Crank-Nicolson combined with Adams Bashfroth 2 (CN-AB2) \cite{JL04,MT98}, a three-step backward extrapolating scheme in \cite{BDK82}, and backward Euler-forward Euler (BE-FE) in \cite{Y08}.

\section{Notation and preliminaries\label{sec:preliminaries}}

Let $\Omega \subset \mathbb{R}^{d}, d=2,3,$ denote an open regular domain with boundary $\partial \Omega$ and let $[0,T]$ denote a time interval. We consider the incompressible NSE 
\begin{equation}\label{eq:NSE}
\left\{\begin{aligned}
u_{t}+u\cdot\nabla u-\nu\Delta u+\nabla p  &
=f(x,t)&\quad\forall x\in\Omega\times(0,T]\\
\nabla\cdot u  &  =0&\quad\forall x\in\Omega\times(0,T]\\
u  &  =0&\quad\forall x\in\partial\Omega\times(0,T]\\
u(x,0)  &  =u^{0}(x)&\quad\forall x\in\Omega.
\end{aligned}\right.
\end{equation}

We denote by $\|\cdot\|$ and $(\cdot,\cdot)$ the $L^{2}(\Omega)$ norm and inner product, respectively, and by $\|\cdot\|_{L^{p}}$ and $\|\cdot\|_{W_{p}^{k}}$ the $L^{p}(\Omega)$ and Sobolev
$W^{k}_{p}(\Omega)$ norms, 
respectively. $H^{k}(\Omega)=W_{2}^{k}(\Omega)$ with norm $\|\cdot\|_{k}$. 
The space $H^{-1}(\Omega)$ denotes the dual space of bounded linear functionals defined on $H^{1}_{0}(\Omega)=\{v\in H^{1}(\Omega)\,:\,v=0 \mbox{ on } \partial\Omega\}$; this space is equipped with the norm
$$
\|f\|_{-1}=\sup_{0\neq v\in X}\frac{(f,v)}{\| \nabla v\| } 
\quad\forall f\in H^{-1}(\Omega).
$$

We will consider a discretization of the time interval $[0,T]$ into $N$ separate intervals of varying length and define the norm
$$
\| v \|_{L^{2}(t^{n},t^{n+1}, L^{2}(\Omega))} =  \left (\int_{t_{n}}^{t_{n+1}}\|v\|_{L^{2}(\Omega)}^{2}dt\right)^{\frac{1}{2}}.
$$

The solution spaces $X$ for the velocity and $Q$ for the pressure are respectively defined as
$$
\begin{aligned}
X : =& [H^{1}_{0}(\Omega)]^{d} = \{ v \in [L^{2}(\Omega)]^{d} \,:\, \nabla v \in [L^{2}(\Omega)]^{d \times d} \ \text{and} \  v = 0 \ \text{on} \ \partial \Omega \} \\
Q : =& L^{2}_{0}(\Omega) = \Big\{ q \in L^{2}(\Omega) \,:\, \int_{\Omega} q dx = 0 \Big\}.
\end{aligned}
$$
A weak formulation of \eqref{eq:NSE} is given as follows: find $u:(0,T]\rightarrow X$ and $p:(0,T]\rightarrow Q$ such that, for almost all $t\in(0,T]$, satisfy 
\begin{equation}\label{wfwf}
\left\{\begin{aligned}
(u_{t},v)+(u\cdot\nabla u,v)+\nu(\nabla u,\nabla v)-(p
,\nabla\cdot v)  &  =(f,v)&\quad\forall v\in X\\
(\nabla\cdot u,q)  &  =0&\quad\forall q\in Q\\
u(x,0)&=u^{0}(x).&
\end{aligned}\right.
\end{equation}
The subspace of $X$ consisting of weakly divergence-free functions is defined as
$$
V :=\{v\in X \,:\,(\nabla\cdot v,q)=0\,\,\forall q\in Q\} \subset X.
$$
We denote conforming velocity and pressure finite element spaces based on a regular triangulation of $\Omega$ having maximum triangle diameter $h$ by
$
X_{h}\subset X$ {and} $ Q_{h}\subset Q.
$
We assume that the pair of spaces $(X_h,Q_h)$ satisfy the discrete inf-sup (or $LBB_h$) condition required for stability of finite element approximations; we also assume that the finite element spaces satisfy the approximation properties
$$
\begin{aligned}
\inf_{v_h\in X_h}\| v- v_h \|&\leq C h^{s+1}&\forall v\in [H^{s+1}(\Omega)]^d\\
\inf_{v_h\in X_h}\| \nabla ( v- v_h )\|&\leq C h^s&\forall v\in [H^{s+1}(\Omega)]^d\\
\inf_{q_h\in Q_h}\|  q- q_h \|&\leq C h^s&\forall q\in H^{s}(\Omega),
\end{aligned}
$$
where $C$ is a positive constant that is independent of $h$. The Taylor-Hood element pairs ($P^s$-$P^{s-1}$), $s\geq 2$, are one common choice for which the $LBB_h$ stability condition and the approximation estimates hold \cite{GR79, Max89}.

We also define the discretely divergence-free space $V_h$ as
$$
V_{h} :=\{v_{h}\in X_{h}\,:\,(\nabla\cdot v_{h},q_{h})=0\,\,\forall
q_{h}\in Q_{h}\}  \subset X.
$$

We will also assume that the mesh satisfies the following standard inverse inequalities
\begin{align}
&\|v_{h}\| \leq C h^{-1}\|\nabla v_{h} \| \ \ \ \ \ \ \ \ \ \ \ \forall v_{h} \in X_{h} \label{inv1} \\
&\|v_{h}\|_{\infty} \leq C|\ln h|^{1/2}\|\nabla v_{h}\|\ \ \  \forall v_{h} \in X_{h}, \text{for $d = 2$.} \label{inv2}
\end{align}

Since the finite elements we consider satisfy the inf-sup condition, we can use the following Lemma from \cite{GR79}.

\begin{lemma}
Suppose $(X_h,Q_h)$ satisfy the inf-sup condition. Then for all $u \in V$,
\begin{equation}\notag
\inf_{v_h\in V_h} \|\nabla(u-v_h)\| \leq C(\beta)\inf_{x_h \in X_h}\|\nabla (u- v_h)\|.
\end{equation}
\end{lemma}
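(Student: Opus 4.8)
The plan is to prove this standard comparison between best approximation from the constrained space $V_h$ and from the full space $X_h$ by explicitly correcting an arbitrary approximant. Fix $u \in V$ and let $w_h \in X_h$ be arbitrary. Since $u \in V \subset X$ is weakly divergence free against all of $Q \supset Q_h$, we have $(\nabla \cdot u, q_h) = 0$ for every $q_h \in Q_h$. The goal is to produce a correction $z_h \in X_h$ so that $v_h := w_h + z_h \in V_h$, while keeping $\|\nabla z_h\|$ controlled by $\|\nabla(u - w_h)\|$; the triangle inequality then finishes the estimate.

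First I would build the correction from the discrete inf-sup condition. The $LBB_h$ condition provides $\beta > 0$ with
$$
\inf_{0 \neq q_h \in Q_h}\ \sup_{0 \neq v_h \in X_h} \frac{(\nabla \cdot v_h,\, q_h)}{\|\nabla v_h\|\,\|q_h\|} \geq \beta.
$$
A standard consequence, namely surjectivity of the discrete divergence onto $Q_h$ with a bounded right inverse, is that for the linear functional $q_h \mapsto (\nabla \cdot (u - w_h),\, q_h)$ on $Q_h$ there exists $z_h \in X_h$ with
$$
(\nabla \cdot z_h,\, q_h) = (\nabla \cdot (u - w_h),\, q_h) \quad \forall q_h \in Q_h, \qquad \|\nabla z_h\| \leq \tfrac{1}{\beta}\,\|\nabla(u - w_h)\|.
$$

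With this $z_h$ in hand, set $v_h = w_h + z_h$. Then for every $q_h \in Q_h$,
$$
(\nabla \cdot v_h,\, q_h) = (\nabla \cdot w_h,\, q_h) + (\nabla \cdot (u - w_h),\, q_h) = (\nabla \cdot u,\, q_h) = 0,
$$
so $v_h \in V_h$. The triangle inequality together with the bound on $z_h$ gives
$$
\|\nabla(u - v_h)\| \leq \|\nabla(u - w_h)\| + \|\nabla z_h\| \leq \Big(1 + \tfrac{1}{\beta}\Big)\|\nabla(u - w_h)\|.
$$
Taking the infimum over $v_h \in V_h$ on the left and over $w_h \in X_h$ on the right yields the claim with $C(\beta) = 1 + 1/\beta$.

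The only nonroutine step is the existence and boundedness of the correction $z_h$, that is, the passage from the inf-sup condition to a bounded right inverse of the discrete divergence operator. I expect this to be the main point, and it is precisely the content recalled from \cite{GR79}; it follows from a closed-range argument applied to $\nabla \cdot : X_h \to Q_h$ together with the uniform lower bound $\beta$, which guarantees the inverse estimate $\|\nabla z_h\| \leq \beta^{-1}\|\nabla(u-w_h)\|$ independently of $h$. Everything else reduces to the triangle inequality and the observation that $u \in V$ is annihilated by $\nabla \cdot$ when tested against $Q_h$.
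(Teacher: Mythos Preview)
Your argument is correct and is exactly the standard proof one finds in \cite{GR79}; the paper does not supply its own proof but simply quotes the lemma from that reference. One small remark: the bound $\|\nabla z_h\|\le\beta^{-1}\|\nabla(u-w_h)\|$ tacitly uses $\|\nabla\cdot(u-w_h)\|\le\|\nabla(u-w_h)\|$, which holds only up to a dimensional factor, but this is harmless since it is absorbed into $C(\beta)$.
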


We define the trilinear form
$$
b(u,v,w) = (u\cdot\nabla v,w) 
\qquad\forall u,v,w\in [H^1(\Omega)]^d,
$$
and the explicitly skew-symmetric trilinear form given by 
$$
b^{\ast}(u,v,w):=\frac{1}{2}(u\cdot\nabla v,w)-\frac{1}{2}(u\cdot\nabla w,v)
\qquad\forall u,v,w\in [H^1(\Omega)]^d \, ,
$$
or equivalently,
$$
b^{\ast}(u,v,w):=(u\cdot\nabla v,w)+\frac{1}{2}(\nabla \cdot u,v\cdot w)
\qquad\forall u,v,w\in [H^1(\Omega)]^d \,.
$$
This satisfies the bound \cite{Layton08}
\begin{gather}
b^{\ast}(u,v,w)\leq C_{b^*} \|  \nabla u\|  \| \nabla v\|  \| \nabla
w \| \qquad\forall u, v, w \in X  \label{In1} \\
b^{\ast}(u,v,w)\leq C_{b^*} (\| u \| \|  \nabla u\| )^{1/2}  \| \nabla v\|  \| \nabla
w \| \qquad\forall u, v, w \in X  \label{In3} \\
b^{\ast}(u,v,w)\leq C_{b^*} \| \nabla u\| (\| v \| \|  \nabla v\| )^{1/2} \| \nabla
w \| \qquad\forall u, v, w \in X . \label{In4}
\end{gather}
Additionally, we have the following bound
\begin{lemma}
\begin{equation}\label{In2}
b^{\ast}(u,v,w)\leq C_{b^*} \|  \nabla u\|   \| \nabla v\| (\|  w \|  \| \nabla
w \| )^{1/2}\qquad\forall u, v, w \in X. 
\end{equation}
\end{lemma}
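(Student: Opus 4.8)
The plan is to work from the skew-symmetric form written in divergence form, namely $b^{\ast}(u,v,w) = (u\cdot\nabla v,w) + \frac{1}{2}(\nabla\cdot u, v\cdot w)$, and to estimate the two resulting terms separately using H\"older's inequality. The crucial design choice is to use this divergence form rather than $\frac{1}{2}(u\cdot\nabla v,w)-\frac{1}{2}(u\cdot\nabla w,v)$: in the latter the gradient falls on $w$, which would produce a factor $\|\nabla w\|$, whereas the target bound requires the weaker factor $(\|w\|\,\|\nabla w\|)^{1/2}$. Since $(\|w\|\,\|\nabla w\|)^{1/2}\le C\|\nabla w\|$ by Poincar\'e but not conversely, only the divergence form, in which $w$ appears undifferentiated in both terms, can yield the stated estimate.

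For the first term I would apply H\"older's inequality placing $\nabla v$ in $L^{2}$ and distributing the remaining integrability between $u$ and $w$. In three dimensions the right choice is $(u\cdot\nabla v,w)\le \|u\|_{L^{6}}\,\|\nabla v\|\,\|w\|_{L^{3}}$ (exponents $6,2,3$), while in two dimensions it is $(u\cdot\nabla v,w)\le \|u\|_{L^{4}}\,\|\nabla v\|\,\|w\|_{L^{4}}$ (exponents $4,2,4$). One then invokes the Sobolev embedding $\|u\|_{L^{6}}\le C\|\nabla u\|$ for $d=3$, resp.\ $\|u\|_{L^{4}}\le C\|\nabla u\|$ for $d=2$, which hold on $H^{1}_{0}(\Omega)$, together with the Gagliardo--Nirenberg (Ladyzhenskaya) interpolation inequality $\|w\|_{L^{3}}\le C\|w\|^{1/2}\|\nabla w\|^{1/2}$ for $d=3$, resp.\ $\|w\|_{L^{4}}\le C\|w\|^{1/2}\|\nabla w\|^{1/2}$ for $d=2$. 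These produce exactly the factor $(\|w\|\,\|\nabla w\|)^{1/2}$ on $w$ and $\|\nabla u\|$ on $u$, giving the claimed form.

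The second term is handled the same way: bound $(\nabla\cdot u,v\cdot w)\le \|\nabla\cdot u\|\,\|v\cdot w\|$, note $\|\nabla\cdot u\|\le C\|\nabla u\|$, and estimate $\|v\cdot w\|\le \|v\|_{L^{6}}\|w\|_{L^{3}}$ for $d=3$ or $\|v\|_{L^{4}}\|w\|_{L^{4}}$ for $d=2$. The same embedding and interpolation inequalities then give $\|\nabla v\|$ on $v$ and $(\|w\|\,\|\nabla w\|)^{1/2}$ on $w$. Summing the two contributions and absorbing all constants into a single $C_{b^*}$ completes the argument.

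The main obstacle, or at least the point requiring care, is the dimension-dependent bookkeeping of the H\"older exponents. The estimate must hold uniformly for $d=2,3$, and the three-dimensional case is the binding one: the exponent in the Gagliardo--Nirenberg inequality forces $w\in L^{3}$ (so that both powers equal $1/2$), which in turn forces $u,v\in L^{6}$, precisely the critical Sobolev exponent for $H^{1}_{0}(\Omega)$ in $\mathbb{R}^{3}$. Verifying that $\tfrac16+\tfrac12+\tfrac13=1$ and $\tfrac16+\tfrac13=\tfrac12$ make these exponents simultaneously admissible, and that each interpolation and embedding constant is finite on the given domain, is the only genuinely delicate step; the rest is a routine application of H\"older's inequality.
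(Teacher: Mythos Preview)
Your proposal is correct and follows essentially the same route as the paper: both start from the divergence form $b^{\ast}(u,v,w)=(u\cdot\nabla v,w)+\tfrac12(\nabla\cdot u,v\cdot w)$, apply H\"older with exponents placing $w$ in $L^{3}$ (the paper uses the $6$--$2$--$3$ split uniformly in $d=2,3$ rather than your dimension-by-dimension split), and then invoke Sobolev embedding together with the interpolation $\|w\|_{L^{3}}\le C\|w\|^{1/2}\|\nabla w\|^{1/2}$. The only cosmetic difference is that the paper phrases this last step as $H^{1/2}\hookrightarrow L^{3}$ combined with $\|w\|_{1/2}\le C\|w\|^{1/2}\|\nabla w\|^{1/2}$, which is equivalent to your Gagliardo--Nirenberg inequality.
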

\begin{proof}
We have by repeated H{\"o}lders inequality that
\begin{gather*}
(\nabla \cdot u,v\cdot w) =  \sum_{i=1}^d\int_{\Omega} (\nabla \cdot u)  v_iw_i dx \leq \sum_{i=1}^d\|\nabla \cdot u\|\|v_i\|_{L^6}\|w_i\|_{L^3}
\\
\leq \sqrt{d} \|\nabla \cdot u \| \|v\|_{L^6} \|w\|_{L^3} \leq C(d) \|\nabla  u \| \|v\|_{L^6} \|w\|_{L^3}.
\end{gather*}

Similarly, we have
\begin{gather*}
\int_{\Omega}(u \cdot \nabla v)\cdot w dx \leq C(d) \|u\|_{L^6} \|\nabla v\|\|w\|_{L^3}.
\end{gather*}
By Sobolev embedding theorems, $H^1\hookrightarrow L^6$ and $H^{\frac{1}{2}}\hookrightarrow L^3$ for $d=2,3$. The result then follows from the interpolation inequality $\|w\|_{\frac{1}{2}}\leq C \|w\|^{\frac{1}{2}} \|\nabla w\|^{\frac{1}{2}}$.
\end{proof}

To analyze rates of convergence in Section \ref{section:conv} we will  make the following regularity assumptions on the NSE.
\begin{assumption}\label{assumption:reg}
	In \eqref{eq:NSE} we assume
	$u^{0} \in V, \ p \in L^{2}(0,T; H^{s+1}(\Omega)),  u \in L^{\infty}(0,T;H^{1}(\Omega)) 
	\newline \cap H^{1}(0,T;H^{s+1}(\Omega)) \cap H^{2}(0,T;H^{1}(\Omega)), \text {and } f \in L^{2}(0,T;L^{2}(\Omega))$.
\end{assumption}

\section{The first order method\label{sec:first_order}}	

Our goal is to construct and analyze an IMEX version of the time filtered backward Euler method, which was analyzed for ODEs in \cite{guzel}, and for a fully implicit, constant stepsize NSE discretization in \cite{DLZ18}. These methods are based on applying a time filter to the backward Euler solution to achieve second order accuracy. Thus, we need to choose our IMEX version of backward Euler carefully.

The standard choice is the BE-FE scheme, which is 
\begin{equation}\label{BE-FE}
\begin{aligned}
\left(\frac{u_{h}^{n+1} - u_{h}^{n}}{\dt},v_h\right) + \nu(\nabla u^{n+1}_h,\nabla v_h) + &b^{\ast}(u_{h}^{n},u_{h}^{n},v_h)  
\\ - (p^{n+1},\nabla \cdot v_h)  &= (f^{n+1},v_{h}) \qquad \qquad \forall
v_{h}\in X_{h}
\\
(\nabla \cdot u_{h}^{n+1},q_{h}) &= 0 \qquad \qquad \qquad \qquad \forall
q_{h}\in Q_{h}.
\end{aligned}
\end{equation}
This is insufficient, since the time filter will not correct the first order, explicit treatment of the nonlinearity. Instead, we use a nonstandard BE-AB2 combination where the constant extrapolation $u_h^{n+1} = u_{h}^n + \mathcal{O}(\Delta t)$ in the nonlinearity is replaced with a linear extrapolation. For constant stepsizes, this means $u^{n+1}_h \approx 2u^{n}_h-u^{n-1}_h + \mathcal{O}(\Delta t^2)$.

For variable stepsizes, let $\Delta t_n = t^{n+1}-t^n$. The stepsize ratios are $\omega_n = \frac{\Delta t_n}{\Delta t_{n-1}}$. The second order extrapolation of $u_h^{n+1}$ becomes $E^{n+1}(u_h) :=(1+\omega_n)u^{n}_h - \omega_n u^{n-1}_h$. We then have the variable stepsize BE-AB2 (VSS BE-AB2) method.

\begin{equation}\label{eqn:VSS-BE-AB2}
\begin{aligned}
\left(\frac{u_{h}^{n+1} - u_{h}^{n}}{\dt_n},v_h\right) + \nu(\nabla u^{n+1}_h,\nabla v_h) + &b^{\ast}(E^{n+1}(u_h),E^{n+1}(u_h),v_h)  
\\ - (p^{n+1},\nabla \cdot v_h)  &= (f^{n+1},v_{h}) \qquad \qquad \forall
v_{h}\in X_{h}
\\
(\nabla \cdot u_{h}^{n+1},q_{h}) &= 0 \qquad \qquad \qquad \qquad \forall
q_{h}\in Q_{h}.
\end{aligned}
\end{equation}

This is a second order perturbation of implicit backward Euler, and applying the time filter results in a second order method. In the next two subsections, we rigorously show that this new method is variable stepsize stable, and is globally convergent.

\subsection{Energy Stability for VSS BE-AB2\label{section:stab}}

In this section we prove nonlinear, conditional stability of \eqref{eqn:VSS-BE-AB2}. We begin with a general stability result. We then show that the timestep condition can be improved in some special cases.

\begin{theorem}\label{theorem:stability_BE}[General Stability of VSS BE-AB2]
Consider the method \eqref{eqn:VSS-BE-AB2}, let $\Omega \subset \mathbb{R}^{d}, d =2,3,$ and $C_{stab} > 0$ be a constant independent of $h,\Delta t_{n}, \omega_{n}, \nu$ and $u$. Suppose that
\begin{equation}\label{vssstab_cond_1}
1 - \frac{C_{stab} \dt_n(1+\omega_n^2)}{\nu h}\| \nabla E^{n+1}(u_h)\|^{2} \geq 0.
\end{equation}
Then, for any $N> 1$
\begin{equation}\label{energy_inequality}
\begin{aligned}
&\frac{1}{2}\|u_{h}^{N}\|^{2} + \frac{1}{4}\|u_{h}^{N} - u_{h}^{N-1}\|^{2} +  \frac{\nu }{4}\sum_{n=1}^{N-1}\dt_n\|\nabla u_{h}^{n+1}\|^{2} 
\\ 
&+ \sum_{n=1}^{N-1}\frac{1}{8(1+\omega_n^2)}\|  u^{n+1}_{h} - u_{h}^{n} +\omega_n(u_{h}^{n} - u_{h}^{n-1})\|^{2} \leq \sum_{n=1}^{N-1} \frac{\dt_n}{\nu}\|f^{n+1}\|^{2}_{-1} 
\\
&+ \frac{1}{2}\|u^{1}_{h}\|^{2} + \frac{1}{4}\|u_{h}^{1} - u_{h}^{0}\|^{2}.
\end{aligned}
\end{equation}
\end{theorem}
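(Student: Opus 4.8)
The plan is to test the momentum equation in \eqref{eqn:VSS-BE-AB2} with $v_h = u_h^{n+1}$ and organize the result around the functional $\mathcal{E}^n := \tfrac12\|u_h^n\|^2 + \tfrac14\|u_h^n - u_h^{n-1}\|^2$, whose telescoped endpoints are exactly the $\mathcal{E}^N$ and $\mathcal{E}^1$ boundary terms appearing in \eqref{energy_inequality}. First I would dispose of the pressure: since $u_h^{n+1}$ is discretely divergence free, taking $q_h = p^{n+1}\in Q_h$ in the continuity equation gives $(p^{n+1},\nabla\cdot u_h^{n+1})=0$, so that term drops. For the discrete time derivative I would apply the polarization identity $(a-b,a)=\tfrac12(\|a\|^2-\|b\|^2+\|a-b\|^2)$ to $(u_h^{n+1}-u_h^n,u_h^{n+1})$, and then reshuffle the resulting $\tfrac12\|u_h^{n+1}-u_h^n\|^2$ into the differences defining $\mathcal{E}^n$. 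Writing $D^{k}:=u_h^{k}-u_h^{k-1}$, this produces the per-step identity
\[
\mathcal{E}^{n+1}-\mathcal{E}^n+\tfrac14\|D^{n+1}\|^2+\tfrac14\|D^n\|^2+\nu\dt_n\|\nabla u_h^{n+1}\|^2+\dt_n b^{\ast}(E^{n+1},E^{n+1},u_h^{n+1})=\dt_n(f^{n+1},u_h^{n+1}).
\]

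The heart of the argument is the explicit nonlinear term. By skew symmetry $b^{\ast}(E^{n+1},E^{n+1},E^{n+1})=0$, so $b^{\ast}(E^{n+1},E^{n+1},u_h^{n+1})=b^{\ast}(E^{n+1},E^{n+1},u_h^{n+1}-E^{n+1})$, and since $u_h^{n+1}-E^{n+1}(u_h)=D^{n+1}-\omega_n D^n$, the entire explicit contribution is carried by velocity differences. I would bound it with one of the trilinear estimates \eqref{In1}--\eqref{In2}, use the inverse inequality \eqref{inv1} to exchange a gradient for the mesh factor $h^{-1}$, and use Cauchy--Schwarz on the extrapolation coefficients, $\|D^{n+1}-\omega_n D^n\|^2\le(1+\omega_n^2)(\|D^{n+1}\|^2+\|D^n\|^2)$, to produce the factor $(1+\omega_n^2)$; a Young step then plays this against the retained dissipation. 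This is exactly the step that manufactures the quantity $\tfrac{C_{stab}\dt_n(1+\omega_n^2)}{\nu h}\|\nabla E^{n+1}\|^2$ controlling \eqref{vssstab_cond_1}.

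For the remaining bookkeeping I would invoke the elementary algebraic identity $\tfrac14\|D^{n+1}\|^2+\tfrac14\|D^n\|^2=\tfrac{1}{8(1+\omega_n^2)}\|D^{n+1}+\omega_n D^n\|^2+\mathcal{R}_n$, where a direct expansion shows $\mathcal{R}_n$ is a positive semidefinite quadratic form in $D^{n+1},D^n$; this is what isolates the difference dissipation $\tfrac{1}{8(1+\omega_n^2)}\|D^{n+1}+\omega_n D^n\|^2$ kept on the left of \eqref{energy_inequality}, leaving $\mathcal{R}_n$ (together with part of the viscous term) available to absorb the nonlinear contribution. The forcing is handled by duality and Young, $\dt_n(f^{n+1},u_h^{n+1})\le\tfrac{\dt_n}{\nu}\|f^{n+1}\|_{-1}^2+\tfrac{\nu\dt_n}{4}\|\nabla u_h^{n+1}\|^2$, which consumes one quarter of the viscous term and leaves the $\tfrac{\nu}{4}\dt_n\|\nabla u_h^{n+1}\|^2$ retained in \eqref{energy_inequality}. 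Summing the per-step inequality over $n=1,\dots,N-1$ telescopes $\mathcal{E}$ into $\mathcal{E}^N$ and $\mathcal{E}^1$, yielding \eqref{energy_inequality}.

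The main obstacle is precisely the nonlinear estimate under variable steps. The explicit term is linear in the velocity differences $D^{n+1},D^n$, whereas the dissipation into which I must absorb it is quadratic in those same differences, so the inverse inequality, the choice of trilinear bound, and the Young parameters must be balanced so that (i) no spurious term survives on the right-hand side and (ii) the surviving coefficient stays nonnegative under exactly the stated condition \eqref{vssstab_cond_1} rather than a cruder one. Producing the sharp combination $(1+\omega_n^2)/h$, and verifying that the leftover is genuinely controlled by the retained $\tfrac{\nu}{4}\dt_n\|\nabla u_h^{n+1}\|^2$ and $\mathcal{R}_n$ for arbitrary step ratios $\omega_n$, is the delicate part; the polarization and telescoping around $\mathcal{E}^n$ are then routine.
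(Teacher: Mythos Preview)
Your plan follows the paper's proof almost exactly: test with $u_h^{n+1}$, use the polarization identity, rewrite the explicit nonlinearity via skew--symmetry so that only $D^{n+1}-\omega_n D^n$ survives in the last slot, apply a trilinear bound together with the inverse inequality, and split the resulting $\|D^{n+1}-\omega_n D^n\|^2$ using the parallelogram law so that $\tfrac14\|D^{n+1}\|^2-\tfrac14\|D^n\|^2$ telescopes and $\tfrac{1}{8(1+\omega_n^2)}\|D^{n+1}+\omega_n D^n\|^2$ remains. Your $\mathcal R_n$ computation is correct and equivalent to the paper's use of the parallelogram law.

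There is, however, one genuine gap in your nonlinear step. You invoke $b^{\ast}(E,E,E)=0$ and work with $b^{\ast}(E^{n+1},E^{n+1},D^{n+1}-\omega_n D^n)$. Any of the estimates \eqref{In1}--\eqref{In2} applied to this form produces a factor $\|\nabla E^{n+1}\|^{2}$, never $\|\nabla u_h^{n+1}\|$; after the inverse inequality and Young's inequality you are left with a term of the type $C\dt_n^{2}(1+\omega_n^2)h^{-1}\|\nabla E^{n+1}\|^{4}$ on the right, which cannot be absorbed by the viscous dissipation $\nu\dt_n\|\nabla u_h^{n+1}\|^2$ (or by $\mathcal R_n$) and therefore does not produce the coefficient $1-\tfrac{C_{stab}\dt_n(1+\omega_n^2)}{\nu h}\|\nabla E^{n+1}\|^2$ required by \eqref{vssstab_cond_1}. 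The paper instead uses the \emph{other} skew--symmetry identity, $b^{\ast}(E^{n+1},u_h^{n+1},u_h^{n+1})=0$, to write
\[
b^{\ast}(E^{n+1},E^{n+1},u_h^{n+1})=b^{\ast}\bigl(E^{n+1},u_h^{n+1},u_h^{n+1}-E^{n+1}\bigr)=b^{\ast}\bigl(E^{n+1},u_h^{n+1},D^{n+1}-\omega_n D^n\bigr).
\]
Now \eqref{In2} gives $C\|\nabla E^{n+1}\|\,\|\nabla u_h^{n+1}\|\bigl(\|w\|\|\nabla w\|\bigr)^{1/2}$ with $w=D^{n+1}-\omega_nD^n$; applying \eqref{inv1} to $\|\nabla w\|^{1/2}$ and Young's inequality yields exactly
\[
\frac{1}{8(1+\omega_n^2)}\|D^{n+1}-\omega_nD^n\|^2+\frac{C\dt_n(1+\omega_n^2)}{h}\|\nabla E^{n+1}\|^2\cdot\dt_n\|\nabla u_h^{n+1}\|^2,
\]
and the second piece is absorbed by $\tfrac{\nu\dt_n}{2}\|\nabla u_h^{n+1}\|^2$ precisely under \eqref{vssstab_cond_1}. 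With this single correction your argument closes exactly as in the paper; note also that $\mathcal R_n$ is then merely a nonnegative leftover that may be dropped, not a term used for absorption.
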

\begin{proof}
	Setting $v_h = u_h^{n+1}$ and multiplying by $\dt_n$ we have
	\begin{gather*}
	\frac{1}{2}\|u_{h}^{n+1}\|^{2} - \frac{1}{2}\|u_{h}^{n}\|^{2} +  \frac{1}{2}\|u_{h}^{n+1} - u_{h}^{n}\|^{2} + \dt_n\nu\|\nabla u_{h}^{n+1}\|^{2}
	\\
	+ \dt_n b^{\ast}(E^{n+1}(u_h),E^{n+1}(u_h),u_{h}^{n+1}) = \dt_n (f^{n+1},u_{h}^{n+1}). 
	\end{gather*}
	Applying Young's inequality to the right hand side then gives
	\begin{gather*}
	\frac{1}{2}\|u_{h}^{n+1}\|^{2} - \frac{1}{2}\|u_{h}^{n}\|^{2}+ \frac{1}{2}\|u_{h}^{n+1} - u_{h}^{n}\|^{2} + \dt_n\nu\|\nabla u_{h}^{n+1}\|^{2}
	\\
	\dt_n b^{\ast}(u_{h}^n+ \omega_n (u_{h}^{n}-u_{h}^{n-1}),u_{h}^n+ \omega_n (u_{h}^{n}-u_{h}^{n-1}),u_{h}^{n+1}) 
	\\
	\leq \frac{\nu \dt_n}{4}\|\nabla u_{h}^{n+1}\|^{2} + \frac{\dt_n}{\nu}\|f^{n+1}\|_{-1}^{2}. 
	\end{gather*}
	Next, we deal with the nonlinearity. Applying \eqref{In2}, using the skew symmetry of the nonlinearity, applying the Cauchy-Schwarz-Young, Poincar\'{e}-Friedrichs, and inverse inequalities we have
	\begin{equation*}
	\begin{aligned}
	&\dt_n b^{\ast}(E^{n+1}(u_h),u_{h}^n+ \omega_n (u_{h}^{n}-u_{h}^{n-1}),u_{h}^{n+1})  \\ 
	&= \dt_n b^{\ast}(E^{n+1}(u_h),u_{h}^{n+1},u_{h}^{n+1}-u_{h}^n- \omega_n (u_{h}^{n}-u_{h}^{n-1})) \\
	&\leq C \dt_n h^{-\frac{1}{2}}\| \nabla E^{n+1}(u_h)\|\| u_{h}^{n+1}-u_{h}^n- \omega_n (u_{h}^{n}-u_{h}^{n-1}) \|\| \nabla u^{n+1}_h\| \\
	&\leq C \frac{\dt_n^{2}(1+\omega_n^2)}{h}\| \nabla E^{n+1}(u_h)\|^{2}\| \nabla u^{n+1}_h\|^{2} \\
	&+ \frac{1}{8(1+\omega_n^2)}\| u_{h}^{n+1}-u_{h}^n- \omega_n (u_{h}^{n}-u_{h}^{n-1}) \|^{2}.
	\end{aligned}
	\end{equation*}
	For the last term we have by the parallelogram law
	\begin{gather*}
	 \frac{1}{8(1+\omega_n^2)}\| u_{h}^{n+1}-u_{h}^n- \omega_n (u_{h}^{n}-u_{h}^{n-1}) \|^2 
	\\
	=\frac{1}{4(1+\omega_n^2)}\| u^{n+1}_{h} - u_{h}^{n}\|^{2} + \frac{\omega_n^2}{4(1+\omega_n^2)}\|u_{h}^{n} - u_{h}^{n-1}\|^{2} 
	\\
	- \frac{1}{8(1+\omega_n^2)}\|  u^{n+1}_{h} - u_{h}^{n} +\omega_n(u_{h}^{n} - u_{h}^{n-1})\|^{2}
	\\
	\leq \frac{1}{4}\| u^{n+1}_{h} - u_{h}^{n}\|^{2} + \frac{1}{4}\|u_{h}^{n} - u_{h}^{n-1}\|^{2}
	\\
	- \frac{1}{8(1+\omega_n^2)}\|  u^{n+1}_{h} - u_{h}^{n} +\omega_n(u_{h}^{n} - u_{h}^{n-1})\|^{2}.
	\end{gather*}
	Combining like terms we then have
	\begin{gather*}
	\frac{1}{2}\|u_{h}^{n+1}\|^{2} - \frac{1}{2}\|u_{h}^{n}\|^{2} + \frac{1}{4}\|u_{h}^{n+1} - u_{h}^{n}\|^{2} - \frac{1}{4}\|u_{h}^{n} - u_{h}^{n-1}\|^{2}  \\
	+ \frac{\nu \dt_n}{4} \|\nabla u_{h}^{n+1} \|^{2} +  \frac{\nu \dt_n}{2}\left(1 - \frac{C \dt_n(1+\omega_n^2)}{\nu h}\| \nabla E^{n+1}(u_h)\|^{2}\right)\|\nabla u_{h}^{n+1}\|^{2} 
	\\
	 + \frac{1}{8(1+\omega_n^2)}\|  u^{n+1}_{h} - u_{h}^{n} +\omega_n(u_{h}^{n} - u_{h}^{n-1})\|^{2} \leq \frac{\dt_n}{\nu}\|f^{n+1}\|^{2}_{-1}.
	\end{gather*}
	Finally, using condition \eqref{vssstab_cond_1}, letting $C = C_{stab}$, and summing from $n = 1$ to $N-1$ the result follows. 
\end{proof}	

There are several cases where the time step condition can be improved by using a different embedding for the nonlinear term. When $\Omega \subset \mathbb{R}^{2}$ the discrete Sobolev embedding will give a less restrictive timestep condition compared to that in Theorem \ref{theorem:stability_BE}.

\begin{theorem}\label{2d stability }[2d Stability of VSS BE-AB2]
	Consider the method \eqref{eqn:VSS-BE-AB2} and let $\Omega \subset \mathbb{R}^{2}$. Suppose that
	\begin{equation}\label{vssstab_cond_2}
	1 - \frac{C_{stab} \dt_n(1+\omega_n^2)|\ln h|}{\nu}\| \nabla E^{n+1}(u_h)\|^{2} \geq 0.
	\end{equation}
	Then, the energy inequality, \eqref{energy_inequality}, from Theorem \ref{theorem:stability_BE} holds.
\end{theorem}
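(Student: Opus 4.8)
The plan is to rerun the proof of Theorem \ref{theorem:stability_BE} essentially verbatim, changing only the single estimate for the nonlinear term so that the discrete Sobolev inequality \eqref{inv2} replaces the combination of \eqref{In2} with the inverse inequality \eqref{inv1}. Concretely, I would again set $v_h=u_h^{n+1}$ in \eqref{eqn:VSS-BE-AB2} and multiply by $\dt_n$ to obtain the energy identity, apply Young's inequality to the forcing term to peel off $\frac{\nu\dt_n}{4}\|\nabla u_h^{n+1}\|^2$, and use skew-symmetry exactly as before to rewrite the convective contribution as $\dt_n b^{\ast}(E^{n+1}(u_h),u_h^{n+1},w)$, where $w:=u_h^{n+1}-u_h^n-\omega_n(u_h^n-u_h^{n-1})$. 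Everything up to this point is identical to Theorem \ref{theorem:stability_BE}, so the only genuine work is to produce a bound on $b^{\ast}(E^{n+1}(u_h),u_h^{n+1},w)$ that carries a factor $|\ln h|^{1/2}$ instead of $h^{-1/2}$.

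For that estimate I would use the divergence-form expression $b^{\ast}(u,v,w)=(u\cdot\nabla v,w)+\frac{1}{2}(\nabla\cdot u,\,v\cdot w)$, which is the crucial choice because it never exposes $\nabla w$. Applying H\"older's inequality and then the two-dimensional embedding \eqref{inv2} to place $E^{n+1}(u_h)$ and $u_h^{n+1}$ in $L^\infty$ while keeping $w$ in $L^2$, I expect
\begin{equation*}
\begin{aligned}
b^{\ast}(E^{n+1}(u_h),u_h^{n+1},w)
&\leq \|E^{n+1}(u_h)\|_{\infty}\|\nabla u_h^{n+1}\|\|w\|+\tfrac{1}{2}\|\nabla\cdot E^{n+1}(u_h)\|\,\|u_h^{n+1}\|_{\infty}\|w\|\\
&\leq C|\ln h|^{1/2}\|\nabla E^{n+1}(u_h)\|\,\|\nabla u_h^{n+1}\|\,\|w\|,
\end{aligned}
\end{equation*}
using $\|\nabla\cdot E^{n+1}(u_h)\|\leq C\|\nabla E^{n+1}(u_h)\|$. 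This is the analogue of the displayed nonlinear bound in Theorem \ref{theorem:stability_BE} with $h^{-1/2}$ traded for $|\ln h|^{1/2}$.

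From here the remainder of the argument is unchanged. Multiplying by $\dt_n$ and applying Young's inequality with the weight chosen so that the $\|w\|^2$ term has coefficient $\frac{1}{8(1+\omega_n^2)}$ yields a remainder $C(1+\omega_n^2)\dt_n^2|\ln h|\,\|\nabla E^{n+1}(u_h)\|^2\|\nabla u_h^{n+1}\|^2$; invoking the parallelogram law exactly as in Theorem \ref{theorem:stability_BE} converts the $\|w\|^2$ contribution into the kinetic-energy differences together with the $-\frac{1}{8(1+\omega_n^2)}\|u_h^{n+1}-u_h^n+\omega_n(u_h^n-u_h^{n-1})\|^2$ term. Condition \eqref{vssstab_cond_2} (with $C=C_{stab}$) is precisely what is needed to absorb the remainder into $\frac{\nu\dt_n}{2}\|\nabla u_h^{n+1}\|^2$, and summing from $n=1$ to $N-1$ reproduces \eqref{energy_inequality}. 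The one point requiring care — and the main obstacle — is the divergence piece $(\nabla\cdot E^{n+1}(u_h),\,u_h^{n+1}\cdot w)$: it must be bounded by placing $L^\infty$ on $u_h^{n+1}$ rather than on $w$, since any estimate that reintroduces $\|\nabla w\|$ would force the inverse inequality \eqref{inv1} and return us to the more restrictive $h^{-1}$ condition of Theorem \ref{theorem:stability_BE}.
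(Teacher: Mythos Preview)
Your proposal is correct and follows the paper's proof essentially verbatim: the paper likewise rewrites $b^{\ast}$ in divergence form, bounds the two pieces via H\"older with $\|E^{n+1}(u_h)\|_\infty\|\nabla u_h^{n+1}\|\|w\|$ and $\|\nabla\cdot E^{n+1}(u_h)\|\|u_h^{n+1}\|_\infty\|w\|$, applies the discrete Sobolev inequality \eqref{inv2} to both $L^\infty$ factors, and then finishes with Young's inequality and the parallelogram-law step exactly as in Theorem~\ref{theorem:stability_BE}. Your explicit remark that the $L^\infty$ norm must land on $u_h^{n+1}$ rather than $w$ in the divergence piece is precisely the point, and matches the paper's treatment.
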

\begin{proof}
	The proof is similar to that of Theorem \ref{theorem:stability_BE}, the key difference being in the treatment of the nonlinearity.  Using Holders inequality for the nonlinear term we have
	\begin{equation*}
	\begin{aligned}
	&\dt_n b^{\ast}(E^{n+1}(u_h),u_{h}^{n+1},u_{h}^{n+1}-u_{h}^n- \omega_n (u_{h}^{n}-u_{h}^{n-1}))
	\\
	&\leq C \dt_n \|E^{n+1}(u_h)\|_{\infty} \|u_{h}^{n+1}-u_{h}^n- \omega_n (u_{h}^{n}-u_{h}^{n-1}) \| \|\nabla u_{h}^{n+1}\|
	\\ 
	&+ C \frac{\dt_n}{2}  \|\nabla \cdot E^{n+1}(u_h)\| \|u_{h}^{n+1}-u_{h}^n- \omega_n (u_{h}^{n}-u_{h}^{n-1}) \| \|u_{h}^{n+1}\|_{\infty}.
	\end{aligned}
	\end{equation*}
	Then, applying \eqref{inv2} and Cauchy-Schwarz-Young
	\begin{equation*}
	\begin{aligned}
	&C \dt_n \|E^{n+1}(u_h)\|_{\infty} \|u_{h}^{n+1}-u_{h}^n- \omega_n (u_{h}^{n}-u_{h}^{n-1}) \| \|\nabla u_{h}^{n+1}\|
\\ 
&+ C \frac{\dt_n}{2}  \|\nabla \cdot E^{n+1}(u_h)\| \|u_{h}^{n+1}-u_{h}^n- \omega_n (u_{h}^{n}-u_{h}^{n-1}) \| \|u_{h}^{n+1}\|_{\infty}
	\\
	&\leq C |\ln h|^{1/2} \dt_{n} \|\nabla E^{n+1}(u_h)\|\|u_{h}^{n+1}-u_{h}^n- \omega_n (u_{h}^{n}-u_{h}^{n-1}) \| \|\nabla u_{h}^{n+1}\|
	\\
	&\leq C {\dt_n^{2}(1+\omega_n^2) |\ln h|}\| \nabla E^{n+1}(u_h)\|^{2}\| \nabla u^{n+1}_h\|^{2} 
	\\
	&+ \frac{1}{8(1+\omega_n^2)}\| u_{h}^{n+1}-u_{h}^n- \omega_n (u_{h}^{n}-u_{h}^{n-1}) \|^{2}.
	\end{aligned}
	\end{equation*}
	The result then follows from Theorem \ref{theorem:stability_BE}.
\end{proof}	

 We also derive stability estimates that do not involve the full gradient of the solution. 

\begin{theorem}\label{L3 stability}[L3/L6 Stability of VSS BE-AB2]
	Consider the method \eqref{eqn:VSS-BE-AB2} and let $\Omega \subset \mathbb{R}^{d}, d =2,3$. Suppose that
	\begin{equation}\label{vssstab_cond_3}
	1 - \frac{C_{stab} \dt_n(1+\omega_n^2)}{\nu h^{2}}\|E^{n+1}(u_h)\|_{L^3}^{2} \geq 0,
	\end{equation}
	or
	\begin{equation}\label{vssstab_cond_4}
	1 - \frac{C_{stab} \dt_n(1+\omega_n^2)}{\nu h}\left(\|E^{n+1}(u_h)\|^{2}_{L^{6}} + \|\nabla \cdot E^{n+1}(u_h)\|^{2} \right) \geq 0.
	\end{equation}
		Then, the energy inequality, \eqref{energy_inequality}, from Theorem \ref{theorem:stability_BE} holds.
\end{theorem}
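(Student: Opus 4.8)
The plan is to mirror the structure of the two preceding proofs exactly, changing only the way the nonlinear term is bounded. As in Theorem \ref{theorem:stability_BE}, I would set $v_h = u_h^{n+1}$ in \eqref{eqn:VSS-BE-AB2}, multiply by $\dt_n$, apply the polarization identity to the time-difference term, and use Young's inequality on $(f^{n+1},u_h^{n+1})$ to peel off $\frac{\nu\dt_n}{4}\|\nabla u_h^{n+1}\|^2 + \frac{\dt_n}{\nu}\|f^{n+1}\|_{-1}^2$. The skew-symmetry step is identical: because $b^*(E^{n+1}(u_h),u_h^{n+1},u_h^{n+1})=0$, I rewrite the nonlinear contribution as $\dt_n b^*(E^{n+1}(u_h),u_h^{n+1},u_h^{n+1}-u_h^n-\omega_n(u_h^n-u_h^{n-1}))$, exactly as in the earlier proofs. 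Everything up to this point is verbatim from Theorem \ref{theorem:stability_BE}, so the only real content is the estimate of this rewritten trilinear form.

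For the bound giving \eqref{vssstab_cond_3}, I would use the second representation of $b^*$, namely $b^*(u,v,w)=(u\cdot\nabla v,w)+\frac12(\nabla\cdot u,v\cdot w)$, and bound each piece by putting $E^{n+1}(u_h)$ in $L^3$. For the first piece, H\"older with exponents $(3,2,6)$ gives $(E^{n+1}(u_h)\cdot\nabla u_h^{n+1},\,\cdot)\le \|E^{n+1}(u_h)\|_{L^3}\|\nabla u_h^{n+1}\|\,\|u_h^{n+1}-u_h^n-\omega_n(\cdots)\|_{L^6}$, and then I would apply the inverse inequality \eqref{inv1} together with a Sobolev embedding $\|\cdot\|_{L^6}\le C\|\nabla\cdot\|$ followed by \eqref{inv1} again, so that the $L^6$ norm of the difference becomes $h^{-1}\|u_h^{n+1}-u_h^n-\omega_n(\cdots)\|$. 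The divergence piece is handled analogously. This produces a factor $h^{-1}\|E^{n+1}(u_h)\|_{L^3}\,\|\nabla u_h^{n+1}\|\,\|u_h^{n+1}-u_h^n-\omega_n(\cdots)\|$; one Cauchy--Schwarz--Young split then yields precisely the $\frac{\dt_n^2(1+\omega_n^2)}{h^2}\|E^{n+1}(u_h)\|_{L^3}^2\|\nabla u_h^{n+1}\|^2$ term matching condition \eqref{vssstab_cond_3}, while the remainder is absorbed into $\frac{1}{8(1+\omega_n^2)}\|u_h^{n+1}-u_h^n-\omega_n(\cdots)\|^2$. From there the parallelogram-law and summation steps are identical to Theorem \ref{theorem:stability_BE}.

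For the alternative bound giving \eqref{vssstab_cond_4}, I would again split $b^*$ into the convection and divergence pieces but keep $E^{n+1}(u_h)$ in $L^6$ and $\nabla\cdot E^{n+1}(u_h)$ in $L^2$. Using H\"older with exponents $(6,3,2)$ on the convection term and $(2,6,3)$ (or the symmetric choice) on the divergence term, I would place the \emph{difference} factor in $L^3$ and apply the embedding $\|\cdot\|_{L^3}\le C\|\cdot\|_{H^{1/2}}\le C\|\cdot\|^{1/2}\|\nabla\cdot\|^{1/2}$ from Lemma~\ref{In2}, followed by a single use of \eqref{inv1} to convert the $\|\nabla\cdot\|$ half-power into an $h^{-1/2}$ times an $L^2$ norm. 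This yields the $h^{-1}$ scaling and the combination $\|E^{n+1}(u_h)\|_{L^6}^2+\|\nabla\cdot E^{n+1}(u_h)\|^2$ appearing in \eqref{vssstab_cond_4}, after the same Cauchy--Schwarz--Young split.

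The step I expect to require the most care is the bookkeeping of which factor of the trilinear form receives the inverse inequality and which Sobolev exponents are paired, since the two conditions differ precisely in that choice: \eqref{vssstab_cond_3} spends two powers of $h^{-1}$ (hence the $h^{-2}$) by keeping $E^{n+1}(u_h)$ in the low-regularity $L^3$ norm and inverse-estimating the $L^6$ factor of the difference, whereas \eqref{vssstab_cond_4} spends only one power of $h^{-1}$ by measuring $E^{n+1}(u_h)$ in the higher $L^6$ norm and interpolating the difference in $L^3$. Getting the $(1+\omega_n^2)$ factor to land exactly so that it cancels against the $\frac{1}{8(1+\omega_n^2)}$ coefficient, as in the earlier proofs, is the one quantitative detail to verify; everything else reduces to Theorem~\ref{theorem:stability_BE}.
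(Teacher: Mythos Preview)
Your overall architecture is right and matches the paper: reduce to the nonlinear bound, then feed into the telescoping argument of Theorem~\ref{theorem:stability_BE}. Your treatment of~\eqref{vssstab_cond_4} is essentially the paper's: use the representation $b^*(u,v,w)=(u\cdot\nabla v,w)+\tfrac12(\nabla\cdot u,\,v\cdot w)$, place $E^{n+1}(u_h)$ in $L^6$ and $\nabla\cdot E^{n+1}(u_h)$ in $L^2$, put the difference in $L^3$, and use $\|\cdot\|_{L^3}\le Ch^{-1/2}\|\cdot\|$ on the discrete space.

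There is, however, a genuine slip in your plan for~\eqref{vssstab_cond_3}. You propose to use the \emph{same} representation $b^*(u,v,w)=(u\cdot\nabla v,w)+\tfrac12(\nabla\cdot u,\,v\cdot w)$ and say the divergence piece is ``handled analogously.'' But that piece is $\tfrac12\bigl(\nabla\cdot E^{n+1}(u_h),\,u_h^{n+1}\cdot(\text{diff})\bigr)$, which carries a derivative on $E^{n+1}(u_h)$ and cannot be bounded in terms of $\|E^{n+1}(u_h)\|_{L^3}$ alone; no H\"older pairing or inverse inequality on the other two factors removes that derivative. The paper avoids this by switching, for the $L^3$ case, to the skew-symmetric form
\[
b^*(u,v,w)=\tfrac12(u\cdot\nabla v,w)-\tfrac12(u\cdot\nabla w,v),
\]
so that \emph{both} pieces leave $E^{n+1}(u_h)$ undifferentiated. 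H\"older with exponents $(3,6,2)$ and $(3,2,6)$ then gives
\[
C\|E^{n+1}(u_h)\|_{L^3}\Bigl(\|\text{diff}\|_{L^6}\|\nabla u_h^{n+1}\| + \|\nabla(\text{diff})\|\,\|u_h^{n+1}\|_{L^6}\Bigr),
\]
and the inverse estimates $\|\text{diff}\|_{L^6}\le Ch^{-1}\|\text{diff}\|$, $\|\nabla(\text{diff})\|\le Ch^{-1}\|\text{diff}\|$ together with $\|u_h^{n+1}\|_{L^6}\le C\|\nabla u_h^{n+1}\|$ produce the $h^{-2}$ in~\eqref{vssstab_cond_3}. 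So the fix is simply to use the other form of $b^*$ for that half of the proof; the rest of your outline goes through unchanged.
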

\begin{proof}
	Proving \eqref{vssstab_cond_3} first, using Holders inequality for the nonlinear term we have
	\begin{equation*}
	\begin{aligned}
	&\dt_n b^{\ast}(E^{n+1}(u_h),u_{h}^{n+1},u_{h}^{n+1}-u_{h}^n- \omega_n (u_{h}^{n}-u_{h}^{n-1}) 
	\\
	&\leq C \frac{\dt_n}{2} \|E^{n+1}(u_h)\|_{L^{3}} \|u_{h}^{n+1}-u_{h}^n- \omega_n (u_{h}^{n}-u_{h}^{n-1}) \|_{L^{6}} \|\nabla u_{h}^{n+1}\|
	\\ 
	&+ C \frac{\dt_n}{2}  \|E^{n+1}(u_h)\|_{L^{3}} \|\nabla(u_{h}^{n+1}-u_{h}^n- \omega_n (u_{h}^{n}-u_{h}^{n-1})) \| \|u_{h}^{n+1}\|_{L^{6}}.
	\end{aligned}
	\end{equation*}
	Using Sobolev embeddings and the inverse inequality we then have
	\begin{equation*}
	\begin{aligned}
	&\|u_{h}^{n+1}\|_{L^{6}} \leq C \|\nabla u_{h}^{n+1}\| 
	\\
	&\|u_{h}^{n+1}-u_{h}^n- \omega_n (u_{h}^{n}-u_{h}^{n-1}) \|_{L^{6}} \leq C h^{-1} \|u_{h}^{n+1}-u_{h}^n- \omega_n (u_{h}^{n}-u_{h}^{n-1}) \|
	\\
	& \|\nabla(u_{h}^{n+1}-u_{h}^n- \omega_n (u_{h}^{n}-u_{h}^{n-1})) \| \leq C h^{-1} \|u_{h}^{n+1}-u_{h}^n- \omega_n (u_{h}^{n}-u_{h}^{n-1}) \|.
	\end{aligned}
	\end{equation*}
	Applying these inequalities and Cauchy-Schwarz-Young it follows
	\begin{equation*}
	\begin{aligned}
&C \frac{\dt_n}{2} \|E^{n+1}(u_h)\|_{L^{3}} \|u_{h}^{n+1}-u_{h}^n- \omega_n (u_{h}^{n}-u_{h}^{n-1}) \|_{L^{6}} \|\nabla u_{h}^{n+1}\|
\\ 
&+ C \frac{\dt_n}{2}  \|E^{n+1}(u_h)\|_{L^{3}} \|\nabla(u_{h}^{n+1}-u_{h}^n- \omega_n (u_{h}^{n}-u_{h}^{n-1})) \| \|u_{h}^{n+1}\|_{L^{6}}
\\
&\leq \frac{C {\dt_n^{2} (1+\omega_{n}^{2})}}{h^{2}}\|E^{n+1}(u_h)\|_{L^{3}}\|\nabla u_{h}^{n+1} \|^{2} 
\\
&+ \frac{1}{8(1+\omega_n^2)}\| u_{h}^{n+1}-u_{h}^n- \omega_n (u_{h}^{n}-u_{h}^{n-1}) \|^{2}. 
	\end{aligned}
	\end{equation*}
	The energy inequality, \eqref{energy_inequality}, then follows from condition \eqref{vssstab_cond_3} and the proof of Theorem \ref{theorem:stability_BE}. 
	
	Turning to \eqref{vssstab_cond_4}, we again use Holders inequality for the nonlinear term
	\begin{equation*}
		\begin{aligned}
		&\dt_n b^{\ast}(E^{n+1}(u_h),u_{h}^{n+1},u_{h}^{n+1}-u_{h}^n- \omega_n (u_{h}^{n}-u_{h}^{n-1}) 
		\\
		&\leq C \frac{\dt_n}{2} \|E^{n+1}(u_h)\|_{L^{6}} \|u_{h}^{n+1}-u_{h}^n- \omega_n (u_{h}^{n}-u_{h}^{n-1}) \|_{L^{3}} \|\nabla u_{h}^{n+1}\|
		\\ 
		&+ C \frac{\dt_n}{2}  \|\nabla \cdot E^{n+1}(u_h)\| \|(u_{h}^{n+1}-u_{h}^n- \omega_n (u_{h}^{n}-u_{h}^{n-1})) \|_{L^{3}} \|u_{h}^{n+1}\|_{L^{6}}.
		\end{aligned}
		\end{equation*}
		Using Sobolev embeddings we have
		\begin{equation*}
		\begin{aligned}
		&\|u_{h}^{n+1}\|_{L^{6}} \leq C \|\nabla u_{h}^{n+1}\| 
		\\
		&\|u_{h}^{n+1}-u_{h}^n- \omega_n (u_{h}^{n}-u_{h}^{n-1}) \|_{L^{3}} \leq C h^{-1/2} \|u_{h}^{n+1}-u_{h}^n- \omega_n (u_{h}^{n}-u_{h}^{n-1}) \|.
		\end{aligned}
		\end{equation*}
		Applying these inequalities and Cauchy-Schwarz-Young it follows
		\begin{equation*}
		\begin{aligned}
		&C \frac{\dt_n}{2} \|E^{n+1}(u_h)\|_{L^{6}} \|u_{h}^{n+1}-u_{h}^n- \omega_n (u_{h}^{n}-u_{h}^{n-1}) \|_{L^{3}} \|\nabla u_{h}^{n+1}\|
		\\ 
		&+ C \frac{\dt_n}{2}  \|\nabla \cdot E^{n+1}(u_h)\| \|(u_{h}^{n+1}-u_{h}^n- \omega_n (u_{h}^{n}-u_{h}^{n-1})) \|_{L^{3}} \|u_{h}^{n+1}\|_{L^{6}}
		\\
		&\leq \frac{C {\dt_n^{2} (1+\omega_{n}^{2})}}{h}\left(\|E^{n+1}(u_h)\|^{2}_{L^{6}} + \|\nabla \cdot E^{n+1}(u_h)\|^{2} \right)\|\nabla u_{h}^{n+1} \|^{2} 
		\\
		&+ \frac{1}{8(1+\omega_n^2)}\| u_{h}^{n+1}-u_{h}^n- \omega_n (u_{h}^{n}-u_{h}^{n-1}) \|^{2}. 
		\end{aligned}
		\end{equation*}
		The energy inequality, \eqref{energy_inequality}, then follows from condition \eqref{vssstab_cond_4} and the proof of Theorem \ref{theorem:stability_BE}.
\end{proof}	
\begin{remark}
	When divergence free elements such as Scott-Vogelius  are used stability condition \eqref{vssstab_cond_4} will depend only on the $L^{6}$ norm of the solution. 
\end{remark}

\subsection{Error Analysis\label{section:conv}}
For this section, we consider the fully-discrete variable step scheme \eqref{eqn:VSS-BE-AB2}. Assuming that $LBB_h$ is satisfied, algorithm \eqref{eqn:VSS-BE-AB2} is equivalent to
\begin{equation}\label{eqn:VSS-BE-AB2-LBBh}
\begin{aligned}
\left(\frac{u_{h}^{n+1} - u_{h}^{n}}{\dt_n},v_h\right) + \nu(\nabla u^{n+1}_h,\nabla v_h) + &b^{\ast}(E^{n+1}(u_h),E^{n+1}(u_h),v_h)  
\\ &= (f^{n+1},v_{h}) \qquad \qquad \forall
v_{h}\in V_{h}.
\end{aligned}
\end{equation}
For the error analysis we will split the velocity as follows
\begin{equation*}
e^{n+1}_u := u^{n+1}-u^{n+1}_h = \left(u^{n+1}-I_{h}(u^{n+1})\right)-\left(u^{n+1}_h-I_{h}(u^{n+1})\right) := \eta^{n+1}-\varphi^{n+1}_h,
\end{equation*}
where $I_{h}$ is the $L^{2}$ projection into the discretely divergence-free space $V_{h}$.

We begin by giving estimates for the consistency errors.

\begin{lemma}[Consistency Error]\label{lemma:consitency_error}
	For $u$ satisfying the regularity assumptions in Assumption \ref{assumption:reg} the following inequalities hold
	\begin{equation}\label{eqn:extrap_consist}
	\begin{aligned}
	&\left\|\frac{u^{n+1}-u^{n}}{\dt_{n}} - u_{t}^{n+1} \right\|^2 \leq C \Delta t_{n}^2 \| \nabla u_{tt}\|^{2}_{L^{2}(t^{n-1},t^{n+1} L^{2}(\Omega))}\\
	&\|\nabla(u^{n+1}-E^{n+1}(u))\|^2 \leq C (\Delta t_{n-1}+ \Delta t_{n})^3 \| \nabla u_{tt}\|^{2}_{L^{2}(t^{n-1},t^{n+1} L^{2}(\Omega))}.
\end{aligned}
	\end{equation}
\end{lemma}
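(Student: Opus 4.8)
The plan is to derive both bounds from Taylor expansions about $t^{n+1}$ written with integral (Peano) remainders, and then to measure these remainders in $\|\cdot\|$, converting to the time-integral norms on the right via the Cauchy--Schwarz inequality in time.

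For the first estimate I would expand $u^n$ about $t^{n+1}$,
\[
u^n = u^{n+1} - \dt_n u_t^{n+1} + \int_{t^{n+1}}^{t^n}(t^n-s)\,u_{tt}(s)\,ds,
\]
so that $\frac{u^{n+1}-u^n}{\dt_n}-u_t^{n+1} = \frac{1}{\dt_n}\int_{t^n}^{t^{n+1}}(s-t^n)\,u_{tt}(s)\,ds$. Taking $\|\cdot\|$, moving it inside the integral, bounding the kernel $(s-t^n)\le \dt_n$, and applying Cauchy--Schwarz in $s$ brings out $\|u_{tt}\|_{L^2(t^n,t^{n+1};L^2(\Omega))}$, with the power of $\dt_n$ determined by the $L^2$-in-time size of the remainder kernel. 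Since $u$ vanishes on $\dO$ for all $t$ we have $u_{tt}(\cdot,t)\in H^1_0(\Omega)$, so Poincar\'e--Friedrichs lets me replace $\|u_{tt}\|$ by $\|\nabla u_{tt}\|$ and enlarge the interval to $(t^{n-1},t^{n+1})$, matching the stated right-hand side.

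For the second estimate I would Taylor-expand both $u^n$ and $u^{n-1}$ about $t^{n+1}$. The constant and first-order terms cancel: $E^{n+1}(u)$ reproduces constants since $(1+\omega_n)-\omega_n=1$, and it reproduces linears because the coefficient of $u_t^{n+1}$ is $(1+\omega_n)\dt_n-\omega_n(\dt_n+\dt_{n-1})=\dt_n-\omega_n\dt_{n-1}=0$, using $\omega_n=\dt_n/\dt_{n-1}$. Hence $u^{n+1}-E^{n+1}(u)$ reduces to a single integral $\int_{t^{n-1}}^{t^{n+1}}K(s)\,u_{tt}(s)\,ds$ against the Peano kernel $K(s)=(t^{n+1}-s)_+-(1+\omega_n)(t^n-s)_++\omega_n(t^{n-1}-s)_+$; taking $\nabla$, then $\|\cdot\|$, and applying Cauchy--Schwarz in time yields the $(\dt_{n-1}+\dt_n)^3$ scaling.

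The main obstacle is keeping the constant independent of the step ratio $\omega_n$. A naive triangle-inequality split of the two separate remainders produces coefficients $(1+\omega_n)$ and $\omega_n$ that blow up as $\omega_n\to\infty$, so the constant would not be uniform. The remedy is to work with the combined kernel $K$, which is continuous, piecewise linear, vanishes at $s=t^{n-1}$ and $s=t^{n+1}$, and attains $|K|=\dt_n$ at $s=t^n$ regardless of $\omega_n$. Establishing this uniform bound $|K(s)|\le \dt_n$ is the one genuinely non-routine step; once it is in hand, $\|\nabla(u^{n+1}-E^{n+1}(u))\|\le \dt_n\int_{t^{n-1}}^{t^{n+1}}\|\nabla u_{tt}\|\,ds$ and a final Cauchy--Schwarz over the interval of width $\dt_{n-1}+\dt_n$ closes the estimate with a constant independent of $\omega_n$.
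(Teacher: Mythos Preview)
Your proposal is correct and follows the same approach the paper indicates: the paper's entire proof reads ``This follows by Taylor's Theorem with integral remainder,'' and you have carried out exactly that, with the Cauchy--Schwarz-in-time step made explicit. Your Peano-kernel argument showing $|K(s)|\le \dt_n$ uniformly in $\omega_n$ is a genuine addition beyond what the paper states and is the right way to ensure the constant does not depend on the step ratio.
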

\begin{proof}
	This follows by Taylor's Theorem with integral remainder.
	\end{proof}

We now prove an estimate for the nonlinear terms which will appear in the error analysis.

\begin{lemma}\label{lemma:nonlinear}[Estimate on the Nonlinear Term]
	For $u$ satisfying the regularity assumptions in Assumption \ref{assumption:reg} the following inequality holds for the nonlinear term
	\begin{equation*}
	\begin{aligned}
	&b^*\left(u^{n+1},u^{n+1},\varphi^{n+1}_h\right)-b^{\ast}(E^{n+1}(u_h),E^{n+1}(u_h),\varphi^{n+1}_h) 
	\\
	& \leq \frac{5}{64}\nu\norm{\nabla \varphi_{h}^{n+1}}^{2} + \frac{C}{\nu}\norm{\nabla E^{n+1}(\eta)}^{2} + \frac{C (\Delta t_{n-1}+ \Delta t_{n})^3}{\nu} \| \nabla u_{tt}\|^{2}_{L^{2}(t^{n-1},t^{n+1} L^{2}(\Omega))}  
	\\
	& + C \nu^{-3} \norm{\varphi_{h}^{n}}^{2} + C \nu^{-3} \norm{\varphi_{h}^{n-1}}^{2} + \frac{1}{16}\nu \norm{\nabla \varphi_{h}^{n}}^{2} + \frac{1}{16}\nu\norm{\nabla \varphi_{h}^{n-1}}^{2} 
	\\
	& + \frac{C}{\nu}\norm{E^{n+1}(u_h)}\norm{\nabla E^{n+1}(u_h)}\norm{\nabla \eta^{n+1}}^{2}  + \frac{1}{4\dt_{n}}\| \varphi^{n+1}_{h} - \varphi_{h}^{n}\|^{2}
	\\
	&+ \frac{1}{4\dt_{n}}\|\varphi_{h}^{n} - \varphi_{h}^{n-1}\|^{2} +\frac{5 C_{stab} \dt_n(1+\omega_n^2)}{8 h} \norm{\nabla E^{n+1}(u_h)}^{2} \norm{\nabla \varphi^{n+1}_h}^{2} 
	\\
	&+ \frac{Ch(\Delta t_{n-1}+ \Delta t_{n})^3}{C_{stab} \dt_n(1+\omega_n^2)} \left(\| \nabla u_{tt}\|^{2}_{L^{2}(t^{n-1},t^{n+1} L^{2}(\Omega))} + \| \nabla \eta_{tt}\|^{2}_{L^{2}(t^{n-1},t^{n+1} L^{2}(\Omega))}\right).
	\end{aligned}
	\end{equation*}
\end{lemma}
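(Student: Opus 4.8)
The plan is to write the difference of the two trilinear forms as a sum of forms each carrying one ``small'' argument, and then to treat separately the pieces whose convecting (first) argument is the exact solution $u^{n+1}$, whose gradient is controlled by Assumption \ref{assumption:reg}, from those whose first argument is the discrete extrapolation $E^{n+1}(u_h)$, whose gradient is \emph{not} a priori controlled in the error equation. First I would use bilinearity of $b^*$ in its first two slots to write
\[
\begin{aligned}
&b^*(u^{n+1},u^{n+1},\varphi_h^{n+1}) - b^*(E^{n+1}(u_h),E^{n+1}(u_h),\varphi_h^{n+1}) \\
&\quad = b^*\big(u^{n+1}-E^{n+1}(u_h),\,u^{n+1},\,\varphi_h^{n+1}\big) + b^*\big(E^{n+1}(u_h),\,u^{n+1}-E^{n+1}(u_h),\,\varphi_h^{n+1}\big),
\end{aligned}
\]
and then decompose the error argument, using linearity of the extrapolation and $e_u^{n+1}=\eta^{n+1}-\varphi_h^{n+1}$, as $u^{n+1}-E^{n+1}(u_h) = \tau + E^{n+1}(\eta) - E^{n+1}(\varphi_h)$, where $\tau := u^{n+1}-E^{n+1}(u)$ is the extrapolation truncation error controlled by Lemma \ref{lemma:consitency_error}.

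In the first bracket the exact solution sits in the gradient-bounded middle slot, so I would bound term by term. The $\tau$ piece and the $E^{n+1}(\eta)$ piece are handled with \eqref{In1} and Young's inequality, using that $\|\nabla u^{n+1}\|$ is bounded; these yield the consistency term $\tfrac{C(\dt_{n-1}+\dt_n)^3}{\nu}\|\nabla u_{tt}\|^2$ (via Lemma \ref{lemma:consitency_error}) and $\tfrac{C}{\nu}\|\nabla E^{n+1}(\eta)\|^2$. For the $-E^{n+1}(\varphi_h)$ piece I would instead use \eqref{In3}, placing $E^{n+1}(\varphi_h)$ in the $(\|\cdot\|\,\|\nabla\cdot\|)^{1/2}$ slot and applying Young twice; after expanding $E^{n+1}(\varphi_h)=(1+\omega_n)\varphi_h^n-\omega_n\varphi_h^{n-1}$ this produces the $C\nu^{-3}\|\varphi_h^{n}\|^2 + C\nu^{-3}\|\varphi_h^{n-1}\|^2$ terms, where the second Young weight $\sim\nu^2$ is what generates the $\nu^{-3}$ power, together with the absorbable $\tfrac{1}{16}\nu\|\nabla\varphi_h^{n}\|^2 + \tfrac{1}{16}\nu\|\nabla\varphi_h^{n-1}\|^2$. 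All the $\varepsilon\nu$ remainders from this bracket collect into the $\tfrac{5}{64}\nu\|\nabla\varphi_h^{n+1}\|^2$ term.

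The second bracket is the crux, and its treatment mirrors the stability proof of Theorem \ref{theorem:stability_BE}, since $E^{n+1}(u_h)$ now occupies the first slot and cannot be hidden behind a bounded factor. Writing $E^{n+1}(\eta) = \eta^{n+1} - \big(\eta^{n+1}-E^{n+1}(\eta)\big)$, the leading $\eta^{n+1}$ contribution $b^*(E^{n+1}(u_h),\eta^{n+1},\varphi_h^{n+1})$ is bounded by \eqref{In3} to give $\tfrac{C}{\nu}\|E^{n+1}(u_h)\|\,\|\nabla E^{n+1}(u_h)\|\,\|\nabla\eta^{n+1}\|^2$. For the $\tau$ piece and the correction $\eta^{n+1}-E^{n+1}(\eta)$ I would use skew-symmetry to move $\varphi_h^{n+1}$ into the middle slot, bound with \eqref{In1}, and split with a Young inequality whose weight is calibrated so the resulting coefficient is exactly the stability coefficient $\tfrac{C_{stab}\dt_n(1+\omega_n^2)}{h}\|\nabla E^{n+1}(u_h)\|^2\|\nabla\varphi_h^{n+1}\|^2$; the complementary halves give the $\tfrac{Ch(\dt_{n-1}+\dt_n)^3}{C_{stab}\dt_n(1+\omega_n^2)}\big(\|\nabla u_{tt}\|^2 + \|\nabla\eta_{tt}\|^2\big)$ term after invoking Lemma \ref{lemma:consitency_error} applied to both $u$ and $\eta$. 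The remaining piece $-b^*(E^{n+1}(u_h),E^{n+1}(\varphi_h),\varphi_h^{n+1})$ is the most delicate: I would write $E^{n+1}(\varphi_h)=\varphi_h^{n+1}-D$ with $D:=(\varphi_h^{n+1}-\varphi_h^{n})-\omega_n(\varphi_h^{n}-\varphi_h^{n-1})$, so the $\varphi_h^{n+1}$ part vanishes by skew-symmetry, leaving (after a second use of skew-symmetry) $-b^*(E^{n+1}(u_h),\varphi_h^{n+1},D)$, precisely the structure from the stability estimate. Applying \eqref{In2} and the inverse inequality \eqref{inv1} to replace $\|\nabla D\|$ by $Ch^{-1}\|D\|$, and choosing the Young weight $\propto \dt_n(1+\omega_n^2)$, reproduces the remaining share of the stability coefficient together with $\tfrac{1}{4\dt_n}\|\varphi_h^{n+1}-\varphi_h^{n}\|^2+\tfrac{1}{4\dt_n}\|\varphi_h^{n}-\varphi_h^{n-1}\|^2$, using $\|D\|^2\le 2\|\varphi_h^{n+1}-\varphi_h^{n}\|^2+2\omega_n^2\|\varphi_h^{n}-\varphi_h^{n-1}\|^2$ and $\omega_n^2/(1+\omega_n^2)\le 1$.

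The hard part, and the point requiring the most care, is exactly this calibration of the discrete-field terms. Each Young split involving $E^{n+1}(u_h)$ must be tuned so that the $\|\nabla E^{n+1}(u_h)\|^2\|\nabla\varphi_h^{n+1}\|^2$ coefficient it produces is a controlled fraction of the stability coefficient $\tfrac{C_{stab}\dt_n(1+\omega_n^2)}{h}$, so that the three such contributions sum to the tabulated $\tfrac{5}{8}$ and can later be absorbed by the stability hypothesis \eqref{vssstab_cond_1}; simultaneously the $\|D\|^2$ remainder must land exactly as the numerical-dissipation differences that telescope against the left-hand side of the energy estimate. Tracking the $\omega_n$ factors through these splits, in particular pushing the $(1+\omega_n)^2$ weights into the generic $\nu^{-3}$ constants so that the gradient-of-$\varphi_h$ coefficients remain clean, is the bookkeeping I would verify most carefully.
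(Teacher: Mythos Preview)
Your proposal is correct and follows essentially the same route as the paper. Your initial two-bracket split together with $u^{n+1}-E^{n+1}(u_h)=\tau+E^{n+1}(\eta)-E^{n+1}(\varphi_h)$ reproduces the paper's three-term decomposition (the paper reaches it by adding and subtracting $b^*(E^{n+1}(u_h),u_h^{n+1},\varphi_h^{n+1})$ and then rewriting $u_h^{n+1}-E^{n+1}(u_h)$, but the resulting pieces are identical); your treatment of each piece, including the key $D=\varphi_h^{n+1}-E^{n+1}(\varphi_h)$ term via skew-symmetry, the half-power inequality plus the inverse estimate, and the Young weight $\propto \dt_n(1+\omega_n^2)$, matches the paper's argument (which uses \eqref{In4} directly on the middle slot rather than \eqref{In2} after a skew-symmetry swap, an equivalent move).
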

\begin{proof}
	Adding and subtracting $b^{\ast}(u_{h}^{n+1},u^{n+1},\varphi_{h}^{n+1})$, $b^{\ast}(E^{n+1}(u_h),u^{n+1},\varphi_{h}^{n+1})$, $b^{\ast}(E^{n+1}(u),u^{n+1},\varphi_{h}^{n+1})$, and $b^{\ast}(u^{n+1},u^{n+1},\varphi_{h}^{n+1})$  we have
	\begin{equation}\label{lemnon:eq1}
	\begin{aligned}
	& b^*\left(u^{n+1},u^{n+1},\varphi^{n+1}_h\right)-b^{\ast}(E^{n+1}(u_h),E^{n+1}(u_h),\varphi^{n+1}_h) \\
	&= b^{\ast}(E^{n+1}(e_{u}),u^{n+1},\varphi^{n+1}_h)
	+b^{\ast}(u^{n+1} - E^{n+1}(u),u^{n+1 },\varphi^{n+1}_h)\\
	&+ b^{\ast}(E^{n+1}(u_h),u^{n+1} - E^{n+1}(u_h),\varphi^{n+1}_h).
	\end{aligned}
	\end{equation}
	For the first term on the right hand side of \eqref{lemnon:eq1} we split it into
	\begin{equation*} b^{\ast}(E^{n+1}(e_{u}),u^{n+1},\varphi^{n+1}_h) = b^{\ast}(E^{n+1}(\eta),u^{n+1},\varphi^{n+1}_h) + b^{\ast}(E^{n+1}(\varphi_h),u^{n+1},\varphi^{n+1}_h) .
	\end{equation*}
	Applying Cauchy-Schwarz-Young, inequality \eqref{In1}, and Assumption \ref{assumption:reg}
	\begin{equation*}
	b^{\ast}(E^{n+1}(\eta),u^{n+1},\varphi^{n+1}_h) \leq \nu C_{1}\norm{\nabla \varphi_{h}^{n+1}}^{2} + \frac{C}{\nu}\norm{\nabla E^{n+1}(\eta)}^{2}.
	\end{equation*}
	Next, we have
	\begin{equation*}
	b^{\ast}(E^{n+1}(\varphi_h),u^{n+1},\varphi^{n+1}_h) = b^{\ast}((1+\omega_{n})\varphi^{n}_h,u^{n+1},\varphi^{n+1}_h) + b^{\ast}(\omega_{n}\varphi^{n-1}_h,u^{n+1},\varphi^{n+1}_h).
	\end{equation*} 
	Using inequality \eqref{In3}, Cauchy-Schwarz-Young, and  Assumption \ref{assumption:reg} we have
	\begin{equation*}
	\begin{aligned}
	b^{\ast}&((1+\omega_{n})\varphi^{n}_h,u^{n+1},\varphi^{n+1}_h) \leq C \norm{\nabla\varphi^{n}_h}^{1/2}\norm{\varphi^{n}_h}^{1/2}\norm{\nabla \varphi^{n+1}_h}
	\\ 
	&\leq C\left(\epsilon\norm{\nabla \varphi^{n+1}_h}^{2} + \frac{1}{\epsilon}\norm{\nabla\varphi^{n}_h}\norm{\varphi^{n}_h}\right)		
	\\
	&\leq C\left(\epsilon\norm{\nabla \varphi^{n+1}_h}^{2} + \frac{1}{\epsilon}\left(\alpha\norm{\nabla\varphi^{n}_h}^{2} + \frac{1}{\alpha}\norm{\varphi^{n}_h}\right)\right)	
	\\
	&\leq  \nu C_{2} \norm{\nabla \varphi_{h}^{n+1}}^{2} + \nu C_{3} \norm{\nabla \varphi_{h}^{n}}^{2} + C \nu^{-3} \norm{\varphi_{h}^{n}}^{2}.
	\end{aligned}
	\end{equation*}
	Similarly,
	\begin{equation*}
	b^{\ast}(\omega_{n}\varphi^{n-1}_h,u^{n+1},\varphi^{n+1}_h) \leq \nu  C_{4} \norm{\nabla \varphi_{h}^{n+1}}^{2} + \nu C_{5} \norm{\nabla \varphi_{h}^{n-1}}^{2} + C \nu^{-3} \norm{\varphi_{h}^{n-1}}^{2}.
	\end{equation*}
	Bounding the second nonlinear term on the right hand side of \eqref{lemnon:eq1}  using Cauchy-Schwarz-Young, inequality \eqref{In1}, Lemma \ref{lemma:consitency_error}, and Assumption \ref{assumption:reg}
	\begin{equation*}
	\begin{aligned}
	b^{\ast}&(u^{n+1} - E^{n+1}(u),u^{n+1 },\varphi^{n+1}_h) 
	\\
	&\leq \frac{C(\Delta t_{n-1}+ \Delta t_{n})^3}{\nu} \| \nabla u_{tt}\|^{2}_{L^{2}(t^{n-1},t^{n+1} L^{2}(\Omega))} + C_{6}\nu\norm{\nabla \varphi^{n+1}_h}^{2}.
	\end{aligned}
	\end{equation*}
	For the last nonlinear term in \eqref{lemnon:eq1},  adding and subtracting \\$b^{\ast}(E^{n+1}(u_h),u_{h}^{n+1 },\varphi^{n+1}_h)$, and using the skew-symmetry of the nonlinear term yields
	\begin{equation}\label{lemnon:eqn2}
	\begin{aligned}
	b^{\ast}&(E^{n+1}(u_h),u^{n+1} - E^{n+1}(u_h),\varphi^{n+1}_h)  	
	\\
	&= b^{\ast}(E^{n+1}(u_h),e_{u}^{n+1},\varphi^{n+1}_h) + b^{\ast}(E^{n+1}(u_h),u_{h}^{n+1} - E^{n+1}(u_h),\varphi^{n+1}_h) 
	\\
	& = b^{\ast}(E^{n+1}(u_h),\eta^{n+1},\varphi^{n+1}_h) + b^{\ast}(E^{n+1}(u_h),u_{h}^{n+1} - E^{n+1}(u_h),\varphi^{n+1}_h). 
	\end{aligned}
	\end{equation}
	For the first term on the right hand side we have  by Cauchy-Schwarz-Young and inequality \eqref{In3}
	\begin{equation*}
	\begin{aligned}
	b^{\ast}&(E^{n+1}(u_h),\eta^{n+1},\varphi^{n+1}_h) 
	\\
	&\leq C\norm{E^{n+1}(u_h)}^{1/2}\norm{\nabla E^{n+1}(u_h)}^{1/2}\norm{\nabla \eta^{n+1}}\norm{\nabla \varphi^{n+1}_{h}}
	\\
	&\leq \frac{C}{\nu}\norm{E^{n+1}(u_h)}\norm{\nabla E^{n+1}(u_h)}\norm{\nabla \eta^{n+1}}^{2} + C_{7}\norm{\nabla \varphi^{n+1}_{h}}^{2}.
	\end{aligned}
	\end{equation*}
	For the second term on the right hand side of \eqref{lemnon:eqn2} we rewrite it as 
	\begin{equation*}
	\begin{aligned}
	b^{\ast}&(E^{n+1}(u_h),u_{h}^{n+1} - E^{n+1}(u_h),\varphi^{n+1}_h) =
	\\ &b^{\ast}(E^{n+1}(u_h),u^{n+1} - E^{n+1}(u),\varphi^{n+1}_h) - b^{\ast}(E^{n+1}(u_h),e_{u}^{n+1} - E^{n+1}(e_{u}),\varphi^{n+1}_h).  
	\end{aligned}
	\end{equation*}
	Bounding the first of these terms using Cauchy-Schwarz-Young, inequality \eqref{In1}, and Lemma \ref{lemma:consitency_error}
	\begin{equation*}
	\begin{aligned}
	b^{\ast}&(E^{n+1}(u_h),u^{n+1} - E^{n+1}(u),\varphi^{n+1}_h) 
	\\
	&\leq C \norm{\nabla E^{n+1}(u_h)} \norm{\nabla(u^{n+1} - E^{n+1}(u))} \norm{\nabla \varphi^{n+1}_h}
	\\
	&\leq \frac{C_{stab} \dt_n(1+\omega_n^2)}{16h}\norm{\nabla E^{n+1}(u_h)}^{2} \norm{\nabla \varphi^{n+1}_h}^{2}
	\\
	&+ \frac{Ch(\Delta t_{n-1}+ \Delta t_{n})^3}{C_{stab} \dt_n(1+\omega_n^2)} \| \nabla u_{tt}\|^{2}_{L^{2}(t^{n-1},t^{n+1} L^{2}(\Omega))}.
	\end{aligned}
	\end{equation*}
	We next rewrite the term
	\begin{equation*}
	\begin{aligned}
	b^{\ast}&(E^{n+1}(u_h),e_{u}^{n+1} - E^{n+1}(e_{u}),\varphi^{n+1}_h) 
	\\&= b^{\ast}(E^{n+1}(u_h),\eta^{n+1} - E^{n+1}(\eta),\varphi^{n+1}_h) + b^{\ast}(E^{n+1}(u_h),\varphi_{h}^{n+1} - E^{n+1}(\varphi_{h}),\varphi^{n+1}_h).
	\end{aligned}
	\end{equation*}
		Bounding the first of these terms using Cauchy-Schwarz-Young,inequality \eqref{In1}, and Lemma \ref{lemma:consitency_error}
	\begin{equation*}
	\begin{aligned}
	b^{\ast}&(E^{n+1}(u_h),\eta^{n+1} - E^{n+1}(\eta),\varphi^{n+1}_h) 
	\\
	&\leq C \norm{\nabla E^{n+1}(u_h)} \norm{\nabla(\eta^{n+1} - E^{n+1}(\eta))} \norm{\nabla \varphi^{n+1}_h}
	\\
	&\leq \frac{C_{stab} \dt_n(1+\omega_n^2)}{16h}\norm{\nabla E^{n+1}(u_h)}^{2} \norm{\nabla \varphi^{n+1}_h}^{2}
	\\
	&+ \frac{Ch(\Delta t_{n-1}+ \Delta t_{n})^3}{C_{stab} \dt_n(1+\omega_n^2)} \| \nabla \eta_{tt}\|^{2}_{L^{2}(t^{n-1},t^{n+1} L^{2}(\Omega))}.
	\end{aligned}
	\end{equation*} 
	Finally, bounding the last term using Cauchy-Schwarz-Young, inequality \eqref{In4}, and the inverse inequality
	\begin{equation}
	\begin{aligned}
	b^{\ast}&(E^{n+1}(u_h),\varphi_{h}^{n+1} - E^{n+1}(\varphi_{h}),\varphi^{n+1}_h)
	\\
	&\leq C \norm{\nabla E^{n+1}(u_h)} \norm{\varphi_{h}^{n+1} - E^{n+1}(\varphi_{h})}^{1/2} \norm{\nabla (\varphi_{h}^{n+1} - E^{n+1}(\varphi_{h}))}^{1/2} \norm{\nabla \varphi^{n+1}_h}
	\\
	&\leq C h^{-\frac{1}{2}} \norm{\nabla E^{n+1}(u_h)} \norm{\varphi_{h}^{n+1} - E^{n+1}(\varphi_{h})}  \norm{\nabla \varphi^{n+1}_h}
	\\
	&\leq \frac{1}{8(1 + \omega_{n}^{2})\dt_{n}} \norm{\varphi_{h}^{n+1} - E^{n+1}(\varphi_{h})}^{2} 
	\\
	&+  \frac{C_{stab} \dt_n(1+\omega_n^2)}{2h} \norm{\nabla E^{n+1}(u_h)}^{2} \norm{\nabla \varphi^{n+1}_h}^{2}
	\\
	&\leq \frac{1}{4\dt_{n}(1+\omega_n^2)}\| \varphi^{n+1}_{h} - \varphi_{h}^{n}\|^{2} + \frac{\omega_n^2}{4\dt_{n}(1+\omega_n^2)}\|\varphi_{h}^{n} - \varphi_{h}^{n-1}\|^{2}  
	\\
	&+ \frac{C_{stab} \dt_n(1+\omega_n^2)}{2h} \norm{\nabla E^{n+1}(u_h)}^{2} \norm{\nabla \varphi^{n+1}_h}^{2}
	\\
	&\leq \frac{1}{4\dt_{n}}\| \varphi^{n+1}_{h} - \varphi_{h}^{n}\|^{2} + \frac{1}{4\dt_{n}}\|\varphi_{h}^{n} - \varphi_{h}^{n-1}\|^{2} 
	\\
	&+  \frac{C_{stab} \dt_n(1+\omega_n^2)}{2h} \norm{\nabla E^{n+1}(u_h)}^{2} \norm{\nabla \varphi^{n+1}_h}^{2}.
	\end{aligned}
	\end{equation}
	Taking $C_{1}, C_{2}, C_{4}, C_{6}, C_{7}  = \frac{1}{64}$, $C_{3},C_{5} = \frac{1}{16}$ the result follows.	
\end{proof}

For the error analysis we will need the following discrete version of Gronwall's inequality found in \cite{B98}.
\begin{lemma}
	Assume that the sequence $\{w_{n}\}$ satisfies
	\begin{equation*}
	w_{n} + c_{n} \leq a_{n} + \sum_{k=0}^{n-1}b_{k}w_{k}, \ \ \ n = 1,2\ldots, N + 1,
	\end{equation*}
	where $\{a_{n}\}$ is nondecreasing and $b_{n}, c_{n} \geq 0$. Then we have the following bound
	\begin{equation*}
	w_{n} + c_{n} \leq a_{n}\exp\left(\sum_{k=0}^{n-1}b_{k}\right).
	\end{equation*} 
\end{lemma}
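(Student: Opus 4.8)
The plan is to prove this by induction on $n$, after first discarding the nonnegative term $c_n$ to reduce the hypothesis to the cleaner inequality $w_n \leq a_n + \sum_{k=0}^{n-1} b_k w_k$. Working with products rather than exponentials keeps the induction sharp, so I would first establish the auxiliary bound
\[
w_n \leq a_n \prod_{k=0}^{n-1}(1+b_k),
\]
and only at the very end pass to the stated exponential form via the elementary inequality $1+x \leq e^x$ applied factor by factor, which yields $\prod_{k=0}^{n-1}(1+b_k) \leq \exp\left(\sum_{k=0}^{n-1} b_k\right)$.

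For the base case I would use the empty-sum instance of the hypothesis, which reads $w_0 + c_0 \leq a_0$ and hence $w_0 \leq a_0$, matching the empty product (equal to $1$); if the hypothesis is posited only for $n \geq 1$, one simply takes $w_0 \leq a_0$ as the starting datum. In the inductive step, assuming the auxiliary bound for all indices below $n$, I would substitute it into the sum and use that $\{a_n\}$ is nondecreasing to factor out $a_n \geq a_k$:
\[
w_n \leq a_n + \sum_{k=0}^{n-1} b_k a_k \prod_{l=0}^{k-1}(1+b_l) \leq a_n\left(1 + \sum_{k=0}^{n-1} b_k \prod_{l=0}^{k-1}(1+b_l)\right).
\]

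The crux of the argument, and the one step that is not purely mechanical, is the telescoping identity
\[
\sum_{k=0}^{n-1} b_k \prod_{l=0}^{k-1}(1+b_l) = \prod_{k=0}^{n-1}(1+b_k) - 1,
\]
which I would verify by setting $P_k := \prod_{l=0}^{k-1}(1+b_l)$ and observing that $P_{k+1}-P_k = b_k P_k$, so the left-hand sum collapses to $P_n - P_0$. Inserting this identity closes the induction and gives $w_n \leq a_n \prod_{k=0}^{n-1}(1+b_k)$. To recover the full conclusion with $c_n$ retained, I would then return to the original inequality $w_n + c_n \leq a_n + \sum_{k=0}^{n-1} b_k w_k$, bound the sum exactly as above using the now-established auxiliary estimate together with the monotonicity of $\{a_n\}$, and conclude $w_n + c_n \leq a_n \prod_{k=0}^{n-1}(1+b_k) \leq a_n \exp\left(\sum_{k=0}^{n-1} b_k\right)$. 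The monotonicity hypothesis is precisely what permits pulling $a_n$ out of the sum; without it one would obtain only a weighted estimate, so I expect the minor bookkeeping around that hypothesis and the base-case indexing to be the only points requiring care.
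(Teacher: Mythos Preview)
Your argument is correct and is the standard induction/telescoping proof of the discrete Gronwall inequality. The paper itself does not supply a proof of this lemma at all: it is simply quoted from the reference \cite{B98} as a known tool used in the subsequent error analysis, so there is no ``paper's approach'' to compare against. Your handling of the base case is appropriate; the lemma as stated begins the hypothesis at $n=1$, and the bound $w_0\leq a_0$ (equivalently, the empty-sum instance) is the implicit starting datum in the cited formulation, exactly as you anticipated.
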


\begin{theorem}[Error Analysis]
	Consider the VSS-BE-AB2 algorithm \eqref{eqn:VSS-BE-AB2}.  Suppose for any $1 \leq n \leq N-1$, the stability conditions from Theorem \ref{theorem:stability_BE} and the regularity of the solution given in Assumption \ref{assumption:reg} holds. Define the maximum stepsize ratio for $1 \leq n \leq N-1$ as
	\begin{equation*}
	\omega_{N^{*}} = \max_{n = 1, \ldots, N-1} \omega_{n}.
	\end{equation*}
	  We then have the following error estimate
	 \begin{gather*}
	\norm{e_{u}^{N}}^2 + \frac{\nu}{4} \sum_{n=0}^{N-1} \Delta t_{n}\|\nabla e^{n+1}_{u} \|
	\\
	\leq C \biggr[h^{2s + 2} + \nu T h^{2s} + \exp\left(\frac{CT}{\nu^{3}}\right)\biggr\{ \frac{1}{2}\norm{e_{u}^{1}-e_{u}^{0}}^2 
	+ \frac{1}{4}\dt_{1}\norm{\Grad{e_{u}^{1}}}^2 + \frac{1}{8}\dt_{1}\norm{\Grad{e_{u}^{0}}}^2 
	\\
 + \sum_{n=1}^{N-1}\bigg({C h^{2s} \dt_{n}}\norm{\nabla u^{n+1}}^{2} + \frac{C\dt_{n} (\Delta t_{n-1}+ \Delta t_{n})^3}{\nu} \| \nabla u_{tt}\|^{2}_{L^{2}(t^{n-1},t^{n+1} L^{2}(\Omega))}  
\\
+ \frac{Ch\dt_{n}(\Delta t_{n-1}+ \Delta t_{n})^3(1 + h^{2s})}{C_{stab} \dt_n(1+\omega_n^2)} \| \nabla u_{tt}\|^{2}_{L^{2}(t^{n-1},t^{n+1} L^{2}(\Omega))}
\\
+ \frac{C h^{2s}\dt_{n}\omega_{n}}{\nu}\left(\norm{\nabla u^{n}}^{2} + \norm{\nabla u^{n-1}}^{2}\right) + \frac{C\dt_{n}^2}{\nu}\norm[L^2(t^{n},t^{n+1};L^2(\Omega))]{u_{tt}}^2 
\\ 
+ \frac{Ch^{2s}\omega_{N^{*}}}{\nu^{\frac{3}{2}}}\left(\sum_{n=1}^{N-1} \frac{\dt_n}{\nu}\|f^{n+1}\|^{2}_{-1} +
\frac{1}{2}\|u^{1}_{h}\|^{2} + \frac{1}{4}\|u_{h}^{1} - u_{h}^{0}\|^{2}\right) \left(\sum_{n=0}^{N-1}\dt_{n}\norm{\nabla u^{n+1}}^{4}\right)^{\frac{1}{2}} 
\\
+ \frac{C h^{2s}  \dt_{n} (\Delta t_{n-1}+ \Delta t_{n})^3}{\nu} \| \nabla u_{tt}\|^{2}_{L^{2}(t^{n-1},t^{n+1} L^{2}(\Omega))} 
+ \frac{C h^{2s}\dt_{n}}{\nu}\norm{p^{n+1}}^{2} \bigg) \biggr\} \biggr].
	\end{gather*}
\end{theorem}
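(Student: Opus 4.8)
The plan is to run a standard energy-based finite element error argument using the split $e^{n+1}_u=\eta^{n+1}-\varphi^{n+1}_h$ already introduced, leaning on the preparatory results: the consistency Lemma~\ref{lemma:consitency_error}, the nonlinear estimate Lemma~\ref{lemma:nonlinear}, the energy bound of Theorem~\ref{theorem:stability_BE}, and the discrete Gronwall lemma. First I would write the weak form \eqref{wfwf} at $t^{n+1}$, tested against an arbitrary $v_h\in V_h$, and subtract the divergence-free scheme \eqref{eqn:VSS-BE-AB2-LBBh}. Because $I_h$ is the $L^2$ projection onto $V_h$, the term $\left(\tfrac{\eta^{n+1}-\eta^n}{\dt_n},v_h\right)$ vanishes, so the discrete time-derivative of the error reduces to its $\varphi_h$ part alone. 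The continuous derivative $u_t^{n+1}$ is replaced by the backward difference plus a consistency error controlled by the first estimate of Lemma~\ref{lemma:consitency_error}, and the pressure contributes $(p^{n+1}-q_h,\nabla\cdot v_h)$ for arbitrary $q_h\in Q_h$, which I bound after choosing $q_h$ to be the best $Q_h$-approximation of $p^{n+1}$.

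Next I would set $v_h=\varphi^{n+1}_h$ and multiply through by $\dt_n$. The time-difference term yields, via the polarization identity, $\tfrac12(\|\varphi^{n+1}_h\|^2-\|\varphi^n_h\|^2+\|\varphi^{n+1}_h-\varphi^n_h\|^2)$, while the viscous term produces $\nu\dt_n\|\nabla\varphi^{n+1}_h\|^2$ minus a cross term $\nu\dt_n(\nabla\eta^{n+1},\nabla\varphi^{n+1}_h)$ disposed of by Young's inequality. The full nonlinear difference $b^*(u^{n+1},u^{n+1},\varphi^{n+1}_h)-b^*(E^{n+1}(u_h),E^{n+1}(u_h),\varphi^{n+1}_h)$ is exactly the quantity bounded in Lemma~\ref{lemma:nonlinear}. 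The crucial simplification is that its stability-type contribution $\tfrac{5C_{stab}\dt_n(1+\omega_n^2)}{8h}\|\nabla E^{n+1}(u_h)\|^2\|\nabla\varphi^{n+1}_h\|^2$ is absorbed into the viscous dissipation by invoking the hypothesis \eqref{vssstab_cond_1}, which guarantees $\tfrac{C_{stab}\dt_n(1+\omega_n^2)}{h}\|\nabla E^{n+1}(u_h)\|^2\le\nu$ and hence bounds it by $\tfrac{5\nu}{8}\dt_n\|\nabla\varphi^{n+1}_h\|^2$. With the explicit constants $C_1,\dots,C_7$ fixed in Lemma~\ref{lemma:nonlinear}, the viscous budget is designed to leave a net $\tfrac{\nu}{4}\dt_n\|\nabla\varphi^{n+1}_h\|^2$ on the left.

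I would then exploit the telescoping structure: the $\tfrac14\dt_n^{-1}$-weighted difference terms returned by Lemma~\ref{lemma:nonlinear}, once multiplied by $\dt_n$, combine with the polarization term to give $\tfrac14\|\varphi^{n+1}_h-\varphi^n_h\|^2-\tfrac14\|\varphi^n_h-\varphi^{n-1}_h\|^2$, which telescopes cleanly when summed from $n=1$ to $N-1$; the $\tfrac1{16}\nu$-weighted gradient terms at indices $n$ and $n-1$ are reabsorbed into the dissipation sum via step-ratio bookkeeping (where $\omega_{N^*}$ enters). Summing, telescoping, and applying the discrete Gronwall lemma to the $C\nu^{-3}\|\varphi^n_h\|^2$ and $C\nu^{-3}\|\varphi^{n-1}_h\|^2$ terms produces the factor $\exp(CT/\nu^3)$. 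Finally I would pass from $\varphi^{n+1}_h$ back to $e^{n+1}_u$ by the triangle inequality, using $\|\eta^N\|\le Ch^{s+1}$ and $\|\nabla\eta^{n+1}\|\le Ch^s$ to generate the $h^{2s+2}$ and $\nu Th^{2s}$ terms, and collecting the remaining right-hand side contributions.

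The main obstacle I anticipate is the term $\tfrac{C}{\nu}\|E^{n+1}(u_h)\|\,\|\nabla E^{n+1}(u_h)\|\,\|\nabla\eta^{n+1}\|^2$ carried over from Lemma~\ref{lemma:nonlinear}: unlike $\|E^{n+1}(u_h)\|$, the gradient $\|\nabla E^{n+1}(u_h)\|$ is \emph{not} controlled pointwise by the energy estimate. Handling it requires using Theorem~\ref{theorem:stability_BE} to bound $\|E^{n+1}(u_h)\|$ uniformly in $n$ (which is exactly why the full right-hand side of \eqref{energy_inequality} resurfaces as a factor in the statement), extracting $h^{2s}$ from $\|\nabla\eta^{n+1}\|^2$, and then a discrete Cauchy--Schwarz in time, with the extrapolation weights absorbed into $\omega_{N^*}$, to produce the quartic sum $\bigl(\sum_n\dt_n\|\nabla u^{n+1}\|^4\bigr)^{1/2}$. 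Keeping every step-ratio factor $\omega_n,\omega_{N^*}$ consistent through the variable-stepsize telescoping and the reabsorption of shifted gradient terms---rather than any single nonlinear estimate---is the genuinely delicate accounting that distinguishes this proof from its constant-step counterpart.
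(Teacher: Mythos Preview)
Your proposal is correct and follows essentially the same approach as the paper's proof: derive the error equation in $V_h$, test with $\varphi_h^{n+1}$, use the $L^2$-projection to kill the $\eta$ time-difference, bound the nonlinear difference via Lemma~\ref{lemma:nonlinear}, absorb the $\tfrac{5C_{stab}\dt_n(1+\omega_n^2)}{8h}\|\nabla E^{n+1}(u_h)\|^2\|\nabla\varphi_h^{n+1}\|^2$ term using the stability hypothesis \eqref{vssstab_cond_1}, sum and apply discrete Gronwall to the $C\nu^{-3}\|\varphi_h^n\|^2$ terms, and handle the $\|E^{n+1}(u_h)\|\|\nabla E^{n+1}(u_h)\|\|\nabla\eta^{n+1}\|^2$ term exactly as you describe (uniform $L^2$ bound from Theorem~\ref{theorem:stability_BE} plus discrete Cauchy--Schwarz in time), which is precisely where $\omega_{N^*}$ enters in the paper. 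The only small discrepancy is that the paper does not route $\omega_{N^*}$ through the shifted $\tfrac{1}{16}\nu\|\nabla\varphi_h^{n}\|^2$, $\tfrac{1}{16}\nu\|\nabla\varphi_h^{n-1}\|^2$ terms---it instead adds and subtracts $\tfrac{1}{8}\nu\|\nabla\varphi_h^n\|^2$ to form difference pairs that are dropped as nonnegative after summation---but your overall structure and identification of the genuinely delicate step are on target.
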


\begin{proof}
	The true solutions of the NSE satisfies, for all $n=1,\dots,N-1$,
	\begin{equation}\label{eqn:true_NSE}
	\begin{aligned}
	&\left(\frac{u^{n+1}-u^{n}}{\dt_n},v_h\right)+b^*\left(u^{n+1},u^{n+1},v_h\right)+\nu\left(\Grad{u^{n+1}},\Grad{v_h}\right)\\
	&-\left({p}^{n+1},\Div{v_h}\right)=(f^{n+1},v_h)+\tau_u(u^{n+1};v_h),
	\end{aligned}
	\end{equation}
	where $\tau_u(u^{n+1};v_h)$ is defined as
	\begin{equation*}
	\tau_u(u^{n+1};v_h) = \left(\frac{u^{n+1} - u^{n}}{\dt_{n}} - u_{t}(t_{n}),v_{h}\right).
	\end{equation*}
	Subtracting \eqref{eqn:VSS-BE-AB2-LBBh} from \eqref{eqn:true_NSE} yields the error equation
	\begin{equation}
	\begin{aligned}
	&\left(\frac{e_u^{n+1}-e_u^{n}}{\dt_{n}},v_h\right)+b^*\left(u^{n+1},u^{n+1},v_h\right)-b^{\ast}(E^{n+1}(u_h),E^{n+1}(u_h),v_h)\\
	&+\nu\left(\Grad{e_u^{n+1}},\Grad{v_h}\right)-\left(p^{n+1},\Div{v_h}\right)=\tau_u(u^{n+1};v_h).
	\end{aligned}
	\end{equation}
	This can equivalently be written as 
	\begin{gather}
	\left(\frac{\varphi_{h}^{n+1}-\varphi_{h}^{n}}{\dt_{n}},v_h\right)+\nu\left(\Grad{\varphi_{h}^{n+1}},\Grad{v_h}\right)\\
	\notag = \left(\frac{\eta^{n+1}-\eta^{n}}{\dt_{n}},v_h\right)+\nu\left(\Grad{\eta^{n+1}},\Grad{v_h}\right) - (p^{n+1},\nabla \cdot v_h)  \\
	\notag b^*\left(u^{n+1},u^{n+1},v_h\right)-b^{\ast}(E^{n+1}(u_h),E^{n+1}(u_h),v_h)-\tau_u(u^{n+1};v_h).
	\end{gather}
	Letting $v_h = 2\dt_n \varphi_{h}^{n+1}$, using the fact that $2(\eta^{n+1}-\eta^{n},v_h) =0$ by the definition of the $L^{2}$ projection, and the polarization identity yields
	\begin{gather*}
	\norm{\varphi_{h}^{n+1}}^2 - \norm{\varphi_{h}^{n}}^2+\norm{\varphi_{h}^{n+1}-\varphi_{h}^{n}}^2 + 2\dt_{n}\nu\norm{\Grad{\varphi_{h}^{n+1}}}^2 \\
	= 2\dt_{n}\nu(\nabla \eta^{n+1},\nabla \varphi_{h}^{n+1}) - 2 \dt_n (p^{n+1},\nabla \cdot \varphi_{h}^{n+1}) 
	\\
	+ 2\dt_n b^*\left(u^{n+1},u^{n+1},\varphi_{h}^{n+1}\right)
	- 2\dt_n b^{\ast}(E^{n+1}(u_h),E^{n+1}(u_h),\varphi^{n+1}_h) 
	\\
	-2\dt_{n}\tau_u(u^{n+1};\varphi_{h}^{n+1}).
	\end{gather*}
	By the Cauchy-Schwarz-Young and Poincar\'{e}-Friedrichs inequalities, we bound the first term on the right-hand-side
	\begin{gather*}
	2\dt_{n}\nu(\nabla \eta^{n+1},\nabla \varphi_{h}^{n+1})\leq \frac{\nu\dt_{n}}{\delta_{1}}\norm{\nabla \eta^{n+1}}^2+\dt_{n}\delta_{1}\nu\norm{\Grad{\varphi_{h}^{n+1}}}^2.
	\end{gather*}
	Next, we consider the pressure term. Since $\varphi_{h}^{n+1} \in V_{h}$ we have
	\begin{equation*}
	\begin{aligned}
	2 \dt_n (p^{n+1},\nabla \cdot \varphi_{h}^{n+1}) &=  2 \dt_n (p^{n+1} - q^{n+1}_{h},\nabla \cdot \varphi_{h}^{n+1})
	\\ &\leq \frac{\dt_{n}}{\delta_{2}\nu}\norm{p^{n+1} - q^{n+1}_{h}}^{2} + \dt_{n}\delta_{2}\nu\|\nabla \varphi_{h}^{n+1}\|^{2}.
	\end{aligned}
	\end{equation*}
	Using Lemma \ref{lemma:consitency_error} and Cauchy-Schwarz-Young the consistency term is bounded as
	\begin{gather*}
	-2\dt_{n}\tau_u(u^{n+1};\varphi_{h}^{n+1}) \leq \frac{C\dt_{n}^2}{\delta_{3}\nu}\norm[L^2(t^{n-1},t^{n+1};L^2(\Omega))]{u_{tt}}^2 +\dt_{n}\nu \delta_{3}\norm{\Grad{\varphi^{n+1}_h}}^2.
	\end{gather*} 
	Lastly, the nonlinear terms are bounded using Lemma \ref{lemma:nonlinear}
	\begin{equation*}
	\begin{aligned}
	&2\dt_n b^*\left(u^{n+1},u^{n+1},\varphi_{h}^{n+1}\right)
	- 2\dt_n b^{\ast}(E^{n+1}(u_h),E^{n+1}(u_h),\varphi^{n+1}_h) 
	\\
	&\leq 2\dt_{n} \bigg\{\frac{5}{64}\nu\norm{\nabla \varphi_{h}^{n+1}}^{2} + \frac{C}{\nu}\norm{\nabla E^{n+1}(\eta)}^{2} + C \nu^{-3} \norm{\varphi_{h}^{n}}^{2} + C \nu^{-3} \norm{\varphi_{h}^{n-1}}^{2}  
	\\
	& + \frac{C (\Delta t_{n-1}+ \Delta t_{n})^3}{\nu} \| \nabla u_{tt}\|^{2}_{L^{2}(t^{n-1},t^{n+1} L^{2}(\Omega))} + \frac{1}{16}\nu \norm{\nabla \varphi_{h}^{n}}^{2} + \frac{1}{16}\nu \norm{\nabla \varphi_{h}^{n-1}}^{2} 
	\\
	& + \frac{C}{\nu}\norm{E^{n+1}(u_h)}\norm{\nabla E^{n+1}(u_h)}\norm{\nabla \eta^{n+1}}^{2}  + \frac{1}{4\dt_{n}}\| \varphi^{n+1}_{h} - \varphi_{h}^{n}\|^{2}
	\\
	&+ \frac{1}{4\dt_{n}}\|\varphi_{h}^{n} - \varphi_{h}^{n-1}\|^{2} +\frac{5C_{stab} \dt_n(1+\omega_n^2)}{8h} \norm{\nabla E^{n+1}(u_h)}^{2} \norm{\nabla \varphi^{n+1}_h}^{2} 
	\\
	&+ \frac{Ch(\Delta t_{n-1}+ \Delta t_{n})^3}{C_{stab} \dt_n(1+\omega_n^2)} \left(\| \nabla u_{tt}\|^{2}_{L^{2}(t^{n-1},t^{n+1} L^{2}(\Omega))} + \| \nabla \eta_{tt}\|^{2}_{L^{2}(t^{n-1},t^{n+1} L^{2}(\Omega))}\right)\bigg\}.
	\end{aligned}
	\end{equation*}
	Taking $\delta_{1},\delta_{2},\delta_{3} = \frac{1}{32}$, adding and subtracting $\frac{1}{8}\nu\|\nabla \varphi^{n}_{h}\|$, and rearranging/combining terms we have
	\begin{gather*}
	\norm{\varphi_{h}^{n+1}}^2 - \norm{\varphi_{h}^{n}}^2+\frac{1}{2}\norm{\varphi_{h}^{n+1}-\varphi_{h}^{n}}^2 - \frac{1}{2}\norm{\varphi_{h}^{n}-\varphi_{h}^{n-1}}^2 + \frac{\dt_{n}\nu}{4}\|\nabla \varphi^{n+1}_{h}\|
	\\
	+ \frac{5\dt_{n}\nu}{4}\norm{\Grad{\varphi_{h}^{n+1}}}^2 \left(1 - \frac{C_{stab} \dt_n^{2}(1+\omega_n^2)}{ h}\norm{\nabla E^{n+1}(u_h)}^{2}\right)
	\\
	+ \frac{\dt_n}{4}\left(\norm{\Grad{\varphi_{h}^{n+1}}}^2 - \norm{\Grad{\varphi_{h}^{n}}}^2 \right) + \frac{\dt_n}{8}\left(\norm{\Grad{\varphi_{h}^{n}}}^2 - \norm{\Grad{\varphi_{h}^{n-1}}}^2 \right)
	\\
	\leq   \frac{C\dt_{n}}{\nu}\norm{E^{n+1}(u_h)}\norm{\nabla E^{n+1}(u_h)}\norm{\nabla \eta^{n+1}}^{2} + \frac{C\dt_{n}}{\nu}\norm{\nabla E^{n+1}(\eta)}^{2}
	\\
	+ \frac{C\dt_{n} (\Delta t_{n-1}+ \Delta t_{n})^3}{\nu} \| \nabla u_{tt}\|^{2}_{L^{2}(t^{n-1},t^{n+1} L^{2}(\Omega))} + C\nu\Delta t_{n}\|\nabla \eta ^{n+1}\|^{2}
	\\
	 + \frac{Ch \dt_{n}(\Delta t_{n-1}+ \Delta t_{n})^3}{C_{stab} \dt_n(1+\omega_n^2)} \left(\| \nabla u_{tt}\|^{2}_{L^{2}(t^{n-1},t^{n+1} L^{2}(\Omega))} + \| \nabla \eta_{tt}\|^{2}_{L^{2}(t^{n-1},t^{n+1} L^{2}(\Omega))}\right)
	\\
	+ C\dt_{n} \nu^{-3} \norm{\varphi_{h}^{n}}^{2} + C \dt_{n} \nu^{-3} \norm{\varphi_{h}^{n-1}}^{2} 
	+ \frac{C\dt_{n}^2}{\nu}\norm[L^2(t^{n-1},t^{n+1};L^2(\Omega))]{u_{tt}}^2 
	\\
	+ \frac{C \dt_{n}}{\nu}\norm{p^{n+1} - q^{n+1}_{h}}^{2}.
	\end{gather*}
	We note that using Cauchy-Schwarz-Young and the stability estimate from Theorem \ref{theorem:stability_BE} we have that 
	\begin{equation*}
	\begin{aligned}
	&\sum_{n=0}^{N-1}\frac{C \dt_{n}}{\nu}\norm{E^{n+1}(u_h)}\norm{\nabla E^{n+1}(u_h)}\norm{\nabla \eta^{n+1}}^{2} 
	\\
	&\leq \frac{C}{\nu}\left(\max_{n = 0, \ldots, N-1}\norm{E^{n+1}(u_h)}\right)\sum_{n=0}^{N-1}\dt_{n}\norm{\nabla E^{n+1}(u_h)}\norm{\nabla \eta^{n+1}}^{2} 
	\\
	&\leq \frac{C\omega_{N^{*}}^{\frac{1}{2}}}{\nu}\left(\sum_{n=1}^{N-1} \frac{\dt_n}{\nu}\|f^{n+1}\|^{2}_{-1} +
 \frac{1}{2}\|u^{1}_{h}\|^{2} + \frac{1}{4}\|u_{h}^{1} - u_{h}^{0}\|^{2}\right)^{\frac{1}{2}} \times
 \\
  &\sum_{n=0}^{N-1}\dt_{n}\norm{\nabla E^{n+1}(u_h)}\norm{\nabla \eta^{n+1}}^{2} 
  \\
	&\leq \frac{C\omega_{N^{*}}^{\frac{1}{2}}}{\nu}\left(\sum_{n=1}^{N-1} \frac{\dt_n}{\nu}\|f^{n+1}\|^{2}_{-1} +
\frac{1}{2}\|u^{1}_{h}\|^{2} + \frac{1}{4}\|u_{h}^{1} - u_{h}^{0}\|^{2}\right)^{\frac{1}{2}} \times
\\
&\left(\sum_{n=0}^{N-1}\dt_{n}\norm{\nabla E^{n+1}(u_h)}^{2}\right)^{\frac{1}{2}}\left(\sum_{n=0}^{N-1}\dt_{n}\norm{\nabla \eta^{n+1}}^{4}\right)^{\frac{1}{2}}
\\
&\leq \frac{C\omega_{N^{*}}}{\nu^{\frac{3}{2}}}\left(\sum_{n=1}^{N-1} \frac{\dt_n}{\nu}\|f^{n+1}\|^{2}_{-1} +
\frac{1}{2}\|u^{1}_{h}\|^{2} + \frac{1}{4}\|u_{h}^{1} - u_{h}^{0}\|^{2}\right) \left(\sum_{n=0}^{N-1}\dt_{n}\norm{\nabla \eta^{n+1}}^{4}\right)^{\frac{1}{2}}.
	\end{aligned}
	\end{equation*}
Then, using Theorem \ref{theorem:stability_BE}, summing from $n=1$ to $n = N -1$, dropping positive terms on the left hand side, and using the above bound we have 
	\begin{gather*}
	\norm{\varphi_{h}^{N}}^2 + \frac{\nu}{4}\sum_{n=0}^{N-1} \dt_{n}\|\nabla \varphi^{n+1}_{h}\| \leq  \frac{1}{2}\norm{\varphi_{h}^{1}-\varphi_{h}^{0}}^2 
	+ \frac{1}{4}\dt_{1}\norm{\Grad{\varphi_{h}^{1}}}^2 + \frac{1}{8}\dt_{1}\norm{\Grad{\varphi_{h}^{0}}}^2  
	 \\
	 + C \nu^{-3} \sum_{n=0}^{N-1} \dt_{n} \norm{\varphi_{h}^{n}}^{2}
	+ \sum_{n=1}^{N-1}\bigg\{{C\dt_{n}\nu}\norm{\nabla \eta^{n+1}}^{2} + \frac{C\Delta t_{n}\omega_{n}}{\nu}\left( \|\nabla \eta^{n}\|^{2} + \|\nabla \eta^{n-1}\|^{2}\right)
	\\
	 + \frac{Ch \dt_{n}(\Delta t_{n-1}+ \Delta t_{n})^3}{C_{stab} \dt_n(1+\omega_n^2)} \left(\| \nabla u_{tt}\|^{2}_{L^{2}(t^{n-1},t^{n+1} L^{2}(\Omega))} + \| \nabla \eta_{tt}\|^{2}_{L^{2}(t^{n-1},t^{n+1} L^{2}(\Omega))}\right)
	\\
	+ \frac{C\dt_{n} (\Delta t_{n-1}+ \Delta t_{n})^3}{\nu} \| \nabla u_{tt}\|^{2}_{L^{2}(t^{n-1},t^{n+1} L^{2}(\Omega))}  
  + \frac{C\dt_{n}^2}{\nu}\norm[L^2(t^{n},t^{n+1};L^2(\Omega))]{u_{tt}}^2 
  \\
	+ \frac{C\omega_{N^{*}}}{\nu^{\frac{3}{2}}}\left(\sum_{n=1}^{N-1} \frac{\dt_n}{\nu}\|f^{n+1}\|^{2}_{-1} +
	\frac{1}{2}\|u^{1}_{h}\|^{2} + \frac{1}{4}\|u_{h}^{1} - u_{h}^{0}\|^{2}\right) \left(\sum_{n=0}^{N-1}\dt_{n}\norm{\nabla \eta^{n+1}}^{4}\right)^{\frac{1}{2}} 
	\\ + \frac{C \dt_{n}}{\nu}\norm{p^{n+1} - q^{n+1}_{h}}^{2}\bigg\}.
	\end{gather*}
 Next, invoking the discrete Gronwall's inequality and applying interpolation inequalities gives
 \begin{gather*}
 \norm{\varphi_{h}^{N}}^2 + \frac{\nu}{4}\sum_{n=0}^{N-1} \dt_{n}\|\nabla \varphi^{n+1}_{h}\| \leq 
 \\
 C \exp\left(\frac{CT}{\nu^{3}}\right)\biggr\{ \frac{1}{2}\norm{\varphi_{h}^{1}-\varphi_{h}^{0}}^2 
 + \frac{1}{4}\dt_{1}\norm{\Grad{\varphi_{h}^{1}}}^2 + \frac{1}{8}\dt_{1}\norm{\Grad{\varphi_{h}^{0}}}^2 
 \\ 
  + \sum_{n=1}^{N-1}\bigg({C h^{2s} \dt_{n}}\norm{\nabla u^{n+1}}^{2} + \frac{C\dt_{n} (\Delta t_{n-1}+ \Delta t_{n})^3}{\nu} \| \nabla u_{tt}\|^{2}_{L^{2}(t^{n-1},t^{n+1} L^{2}(\Omega))}  
 \\
 + \frac{Ch\dt_{n}(\Delta t_{n-1}+ \Delta t_{n})^3(1 + h^{2s})}{C_{stab} \dt_n(1+\omega_n^2)} \| \nabla u_{tt}\|^{2}_{L^{2}(t^{n-1},t^{n+1} L^{2}(\Omega))}
 \\
 + \frac{C h^{2s}\dt_{n}\omega_{n}}{\nu}\left(\norm{\nabla u^{n}}^{2} + \norm{\nabla u^{n-1}}^{2}\right) + \frac{C\dt_{n}^2}{\nu}\norm[L^2(t^{n},t^{n+1};L^2(\Omega))]{u_{tt}}^2 
  \\ 
 	+ \frac{Ch^{2s}\omega_{N^{*}}}{\nu^{\frac{3}{2}}}\left(\sum_{n=1}^{N-1} \frac{\dt_n}{\nu}\|f^{n+1}\|^{2}_{-1} +
 \frac{1}{2}\|u^{1}_{h}\|^{2} + \frac{1}{4}\|u_{h}^{1} - u_{h}^{0}\|^{2}\right) \left(\sum_{n=0}^{N-1}\dt_{n}\norm{\nabla u^{n+1}}^{4}\right)^{\frac{1}{2}} 
 \\
 + \frac{C h^{2s}  \dt_{n} (\Delta t_{n-1}+ \Delta t_{n})^3}{\nu} \| \nabla u_{tt}\|^{2}_{L^{2}(t^{n-1},t^{n+1} L^{2}(\Omega))} 
 + \frac{C h^{2s}\dt_{n}}{\nu}\norm{p^{n+1}}^{2} \bigg) \biggr\}.
 \end{gather*}
 	Finally, by the triangle inequality we have $ e^{n}_{u} \leq 2(\varphi^n_h + \eta^{n})$. Applying this inequality, interpolation inequalities, and absorbing constants, the result follows. 
\end{proof}
\section{The second order method\label{sec:second_order}}
We now describe and analyze the second order member in the VSVO method in Section \ref{sec:vsvo}, which is 

\begin{algorithm}\label{algo:VSS-filtere-BE-AB2}[VSS Filtered-BE-AB2]
	Given $\Delta t$, $(u_{h}^{n},p_{h}^{n})$, \newline $(u_{h}^{n-1},p_{h}^{n-1})$, find $(\hat{u}_{h}^{n+1},p_{h}^{n+1})$ satisfying
	\begin{equation}
	\begin{aligned}
	\left(\frac{\hat{u}_{h}^{n+1} - u_{h}^{n}}{\dt},v_h\right) + \nu(\nabla \hat{u}^{n+1}_h,\nabla v_h) + &b^{\ast}(E^{n+1}(u_h),E^{n+1}(u_h),v_h)  
	\\ - (p_{h}^{n+1},\nabla \cdot v_h)  &= (f^{n+1},v_{h}) \qquad \qquad \forall
	v_{h}\in X_{h}
	\\
	(\nabla \cdot \hat{u}_{h}^{n+1},q_{h}) &= 0 \qquad \qquad \qquad \qquad \forall
	q_{h}\in Q_{h}.
	\end{aligned}
	\end{equation}
	Then, compute
	\begin{equation}
	u_{h}^{n+1} = \hat{u}_{h}^{n+1} - \frac{\omega_{n}}{2\omega_{n}+1}(\hat{u}_{h}^{n+1} - E^{n+1}(u_h)).
	\end{equation}
\end{algorithm}

The first step in the method is exactly BE-AB2, but the temporary solution $\hat{u}_h^{n+1}$ is replaced by a corrected solution, which shows the embedded structure. This method is formally second order. Indeed, by eliminating the intermediate variable $\hat{u}_h^{n+1}$, one can show that the method is a second order perturbation of variable stepsize backward differentiation formula 2 (VSS-BDF2).

While proving energy ability for the variable stepsize method would be a major contribution, we do not yet have a proof (proving energy stability of VSS-BDF2 is already challenging and has only recently been proven for the Cahn-Hilliard equations in \cite{WXYZ19}). For now, we prove stability for the constant stepsize version, although the numerical tests in Section \ref{sec:tests} indicate that VSS Filtered-BE-AB2 is still stable. For brevity, we do not include a full convergence analysis of the method.

\begin{algorithm}\label{algo:CSS-filtere-BE-AB2}[Constant Timestep Filtered-BE-AB2 (BE-AB2+F)]
	Given $\Delta t$, $(u_{h}^{n},p_{h}^{n})$, \newline $(u_{h}^{n-1},p_{h}^{n-1})$, find $(\hat{u}_{h}^{n+1},p_{h}^{n+1})$ satisfying
	\begin{equation}
	\begin{aligned}
	\left(\frac{\hat{u}_{h}^{n+1} - u_{h}^{n}}{\dt},v_h\right) + \nu(\nabla \hat{u}^{n+1}_h,\nabla v_h) + &b^{\ast}(2u_{h}^{n}-u_{h}^{n-1},2u_{h}^{n}-u_{h}^{n-1},v_h)  
	\\ - (p_{h}^{n+1},\nabla \cdot v_h)  &= (f^{n+1},v_{h}) \qquad \qquad \forall
	v_{h}\in X_{h}
	\\
	(\nabla \cdot \hat{u}_{h}^{n+1},q_{h}) &= 0 \qquad \qquad \qquad \qquad \forall
	q_{h}\in Q_{h}.
	\end{aligned}
	\end{equation}
	Then, compute
	\begin{equation}
	u_{h}^{n+1} = \hat{u}_{h}^{n+1} - \frac{1}{3}(\hat{u}^{n+1}_{h} - 2u_{h}^{n} + u_{h}^{n-1}).
	\end{equation}
	Equivalently, this can be written as
	\begin{equation}\label{coupled-Filtered-BE-AB2}
	\begin{aligned}
	\left(\frac{\frac{3}{2}{u}_{h}^{n+1} - 2u_{h}^{n} + \frac{1}{2}u_{h}^{n-1}}{\dt},v_h\right) + \nu\left(\nabla \left(\frac{3}{2}{u}_{h}^{n+1} - u_{h}^{n} + \frac{1}{2}u_{h}^{n-1}\right),\nabla v_h\right)&  \\
	+ b^{\ast}(2u_{h}^{n}-u_{h}^{n-1},2u_{h}^{n}-u_{h}^{n-1},v_h) - (p^{n+1},\nabla \cdot v_h)&  = (f^{n+1},v_{h}) 
	\\
	(\nabla \cdot \hat{u}_{h}^{n+1},q_{h}) &= 0. 
	\end{aligned}
	\end{equation}
\end{algorithm}


In order to prove stability we will need to use the identity

\begin{lemma}\label{identity_2}
	The following identity holds
	\begin{gather*}
	\left(\frac{3}{2}a - 2b + \frac{1}{2}c,\frac{3}{2}a - b + \frac{1}{2}c\right) = \\
	\left(\frac{\|a\|^{2}}{4} + \frac{\|2a -b \|^{2}}{4} + \frac{\|a-b\|^{2}}{4}\right) - \left(\frac{\|b\|^{2}}{4} + \frac{\|2b -c \|^{2}}{4} + \frac{\|b-c\|^{2}}{4}\right) \\
	+ \frac{3}{4}\|a - 2b + c\|^{2}.
	\end{gather*}
\end{lemma}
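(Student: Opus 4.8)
The plan is to prove this purely by algebraic expansion, since it is an identity in the real inner product $(\cdot,\cdot)$ and involves no analysis whatsoever. Writing $p := \tfrac32 a - 2b + \tfrac12 c$ and $q := \tfrac32 a - b + \tfrac12 c$, I would first expand the left-hand side $(p,q)$ using bilinearity together with the symmetry $(x,y)=(y,x)$, collecting everything onto the six-element ``basis'' $\{\|a\|^2,\|b\|^2,\|c\|^2,(a,b),(a,c),(b,c)\}$. A short computation gives
\[
(p,q)=\tfrac94\|a\|^2+2\|b\|^2+\tfrac14\|c\|^2-\tfrac92(a,b)+\tfrac32(a,c)-\tfrac32(b,c).
\]
The entire content of the lemma is then that the right-hand side expands to exactly this same linear combination.

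To expand the right-hand side efficiently I would exploit its telescoping structure, which is in any case the reason the identity is written in this form (it is what will drive the energy argument for Algorithm \ref{algo:CSS-filtere-BE-AB2}). Setting $E(x,y):=\tfrac14\|x\|^2+\tfrac14\|2x-y\|^2+\tfrac14\|x-y\|^2$, the first parenthetical group is $E(a,b)$ and the second is $E(b,c)$, so the right-hand side reads $E(a,b)-E(b,c)+\tfrac34\|a-2b+c\|^2$. I only need to expand $E$ once: expanding $\|2a-b\|^2$ and $\|a-b\|^2$ and summing yields $E(a,b)=\tfrac32\|a\|^2+\tfrac12\|b\|^2-\tfrac32(a,b)$, and $E(b,c)$ follows immediately by the substitution $a\mapsto b,\ b\mapsto c$. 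For the remainder, $\|a-2b+c\|^2=\|a\|^2+4\|b\|^2+\|c\|^2-4(a,b)+2(a,c)-4(b,c)$; multiplying by $\tfrac34$, I would add the three pieces and read off the coefficient of each monomial.

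Comparing the two coefficient vectors term by term then finishes the proof, since every coefficient agrees and hence $(p,q)$ equals the stated right-hand side. There is no genuine obstacle here — the result is elementary linear algebra in an inner product space — and the only thing to watch is the bookkeeping: the cross-term coefficients, especially that of $(a,b)$, which receives contributions from both $E(a,b)$ and the $\tfrac34\|a-2b+c\|^2$ remainder, and the overall factor $\tfrac34$ on the last term are where an arithmetic slip is most likely. Introducing the abbreviation $E(\cdot,\cdot)$ to halve the number of independent expansions is the main device I would use to guard against such errors.
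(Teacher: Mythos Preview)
Your proposal is correct. The paper in fact states Lemma~\ref{identity_2} without proof, so your direct algebraic verification by expanding both sides onto the basis $\{\|a\|^2,\|b\|^2,\|c\|^2,(a,b),(a,c),(b,c)\}$ is exactly the standard way to check such an identity, and your intermediate computations (for $(p,q)$, for $E(a,b)$, and for $\tfrac34\|a-2b+c\|^2$) are all accurate.
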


We then have the following general conditional stability result. This result can be improved further using the same techniques as those in Section \ref{section:stab}. Stability and convergence of the VSS version of this method is currently an open problem. 
\begin{theorem}
Consider the method \eqref{coupled-Filtered-BE-AB2}, let $\Omega \subset \mathbb{R}^{d}, d =2,3,$ and $C_{stab} > 0$ be a constant independent of $h,\Delta t, \nu$ and $u$. Suppose that
\begin{equation}\label{stab_cond_2}
1 - \frac{C_{stab} \dt}{\nu h}\| \nabla (2u_{h}^{n}-u_{h}^{n-1})\|^{2} \geq 0.
\end{equation}
Then, for any $N> 1$
\begin{gather*}
\frac{1}{4}\|u^{N}_{h}\|^{2} + \frac{1}{4}\| 2u_{h}^{N} -  u_{h}^{N-1}\|^{2} + \frac{1}{4}\|u_{h}^{N} - u_{h}^{N-1} \|^{2}
\\
+\frac{\nu \dt}{4}\sum_{n=1}^{N-1}\|\nabla (\frac{3}{2}{u}_{h}^{n+1} - u_{h}^{n} + \frac{1}{2}u_{h}^{n-1})\|^{2}
 \\ \leq \sum_{n=1}^{N-1} \frac{\dt}{\nu}\|f^{n+1}\|^{2}_{-1} + \frac{1}{4}\|u^{1}_{h}\|^{2} + \frac{1}{4}\|2u_{h}^{1} - u_{h}^{0}\|^{2} + \frac{1}{4}\|u_{h}^{1} - u_{h}^{0} \|^{2}.
\end{gather*}
\end{theorem}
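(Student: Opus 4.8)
The plan is to run the energy argument of Theorem~\ref{theorem:stability_BE}, but driven by the filtered time derivative together with its matching viscous variable. The crucial choice of test function is
$$v_h = \tfrac{3}{2}u_h^{n+1} - u_h^n + \tfrac{1}{2}u_h^{n-1},$$
which is exactly the quantity appearing under the gradient in the viscous term of \eqref{coupled-Filtered-BE-AB2}. Since the filter is a linear combination that preserves discrete divergence-freeness, all iterates lie in $V_h$, hence $v_h \in V_h$ and the pressure term drops out. Testing \eqref{coupled-Filtered-BE-AB2} against $v_h$ and multiplying by $\dt$, the time-derivative term becomes $(\tfrac{3}{2}u_h^{n+1} - 2u_h^n + \tfrac{1}{2}u_h^{n-1}, v_h)$, which is precisely the left-hand side of Lemma~\ref{identity_2} with $a = u_h^{n+1}$, $b = u_h^n$, $c = u_h^{n-1}$. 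Applying that identity produces a telescoping difference $E^{n+1} - E^n$ of the discrete energy $E^n = \tfrac{1}{4}\norm{u_h^n}^2 + \tfrac{1}{4}\norm{2u_h^n - u_h^{n-1}}^2 + \tfrac{1}{4}\norm{u_h^n - u_h^{n-1}}^2$ together with a nonnegative jump contribution $\tfrac{3}{4}\norm{u_h^{n+1} - 2u_h^n + u_h^{n-1}}^2$, while the viscous term yields $\dt\nu\norm{\Grad{v_h}}^2$ directly.

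The heart of the proof, and the step I expect to be the main obstacle, is the nonlinear term $\dt\, b^{\ast}(w, w, v_h)$ with $w = 2u_h^n - u_h^{n-1}$. Writing $\delta = u_h^{n+1} - 2u_h^n + u_h^{n-1}$, one first records the key algebraic relation $v_h - w = \tfrac{3}{2}\delta$. I would then invoke skew-symmetry twice: first $b^{\ast}(w, w, v_h) = -b^{\ast}(w, v_h, w)$, and then, substituting $w = v_h - \tfrac{3}{2}\delta$ into the final slot and using $b^{\ast}(w, v_h, v_h) = 0$, one obtains
$$b^{\ast}(w, w, v_h) = \tfrac{3}{2}\,b^{\ast}(w, v_h, \delta).$$
This rewriting is exactly what makes the estimate close: the right-hand side carries one factor of $\Grad{v_h}$ (absorbable by the viscous term) and one factor of $\delta$ (absorbable by the jump term from Lemma~\ref{identity_2}), whereas a naive bound on $b^{\ast}(w,w,v_h)$ would leave an uncontrollable $\norm{\Grad{w}}^4$ term. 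Getting the argument to this form is the whole game; everything afterward is dictated by the stability condition \eqref{stab_cond_2}.

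With this identity in hand the remainder is routine. Applying \eqref{In2} and the inverse inequality $\norm{\Grad{\delta}} \le C h^{-1}\norm{\delta}$ gives
$$\tfrac{3}{2}\,\dt\,b^{\ast}(w, v_h, \delta) \le C\,\dt\,h^{-1/2}\norm{\Grad{w}}\,\norm{\Grad{v_h}}\,\norm{\delta},$$
and a Young's inequality splits this into $\tfrac{3}{8}\norm{\delta}^2$, absorbed by the jump term, plus a constant multiple of $\tfrac{\dt^2}{h}\norm{\Grad{w}}^2\norm{\Grad{v_h}}^2 = \dt\nu\norm{\Grad{v_h}}^2\cdot\tfrac{C\dt}{\nu h}\norm{\Grad{w}}^2$; choosing $C_{stab}$ large enough that \eqref{stab_cond_2} forces the trailing factor below $\tfrac12$ lets this be absorbed into half of the viscous term. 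The forcing is handled by the dual-norm definition and Young's inequality, $\dt(f^{n+1}, v_h) \le \tfrac{\dt}{\nu}\norm[-1]{f^{n+1}}^2 + \tfrac{\dt\nu}{4}\norm{\Grad{v_h}}^2$, consuming another quarter of the viscous term and leaving exactly the $\tfrac{\nu\dt}{4}\norm{\Grad{v_h}}^2$ recorded in the claim. Summing from $n = 1$ to $N-1$ telescopes the energy to $E^N - E^1$, and discarding the remaining nonnegative jump contributions yields the stated inequality.
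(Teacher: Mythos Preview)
Your proposal is correct and follows essentially the same approach as the paper: the same test function $v_h=\tfrac32 u_h^{n+1}-u_h^n+\tfrac12 u_h^{n-1}$, the same use of Lemma~\ref{identity_2} to produce the telescoping energy plus the jump $\tfrac34\|\delta\|^2$, the same skew-symmetry reduction $b^{\ast}(w,w,v_h)=\tfrac32\,b^{\ast}(w,v_h,\delta)$ via $v_h-w=\tfrac32\delta$, and the same combination of \eqref{In2} with the inverse inequality followed by Young's inequality to absorb into the viscous and jump terms. The only cosmetic difference is that you split off $\tfrac38\|\delta\|^2$ where the paper uses the full $\tfrac34\|\delta\|^2$; either choice closes the estimate.
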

\begin{proof}
Setting $v_h = \frac{3}{2}{u}_{h}^{n+1} - u_{h}^{n} + \frac{1}{2}u_{h}^{n-1}$, multiplying by $\dt$, using Lemma \ref{identity_2}, and applying Young's inequality to the right hand side
\begin{gather*}
\frac{1}{4}(\|u^{n+1}_{h}\|^{2} + \| 2u_{h}^{n+1} - u_{h}^{n}\|^{2} + \|u_{h}^{n+1} - u_{h}^{n} \|^{2}) - 
\\
\frac{1}{4}(\|u_{h}^{n}\|^{2} + \|2u_{h}^{n} - u_{h}^{n-1}\|^{2} + \|u_{h}^{n} - u_{h}^{n-1}\|^{2} ) +
\\
\frac{3}{4}\|u_{h}^{n+1} - 2u_{h}^{n} + u_{h}^{n-1}\|^{2} + \dt\nu\|\nabla (\frac{3}{2}{u}_{h}^{n+1} - u_{h}^{n} + \frac{1}{2}u_{h}^{n-1})\|^{2}+
\\
\dt b^{\ast}(2u_{h}^{n}-u_{h}^{n-1},2u_{h}^{n}-u_{h}^{n-1},\frac{3}{2}{u}_{h}^{n+1} - u_{h}^{n} + \frac{1}{2}u_{h}^{n-1}) \\  \leq \frac{\nu \dt}{4}\|\nabla(\frac{3}{2}{u}_{h}^{n+1} - 2u_{h}^{n} + \frac{1}{2}u_{h}^{n-1})\|^{2} + \frac{\dt}{\nu}\|f^{n+1}\|_{-1}^{2}.
\end{gather*}
Next, dealing with the nonlinear term we use the skew symmetry of $b^{\ast}$, Poincaré inequality, inequality \eqref{In1}, the inverse inequality and Young's inequality
\begin{equation*}
\begin{aligned}
\dt b^{\ast}&(2u_{h}^{n}-u_{h}^{n-1},2u_{h}^{n}-u_{h}^{n-1},\frac{3}{2}{u}_{h}^{n+1} - u_{h}^{n} + \frac{1}{2}u_{h}^{n-1})
\\
&= -\dt b^{\ast}(2u_{h}^{n}-u_{h}^{n-1},\frac{3}{2}{u}_{h}^{n+1} - u_{h}^{n} +\frac{1}{2}u_{h}^{n-1},-2u_{h}^{n}+u_{h}^{n-1})  
\\
&= -\frac{3}{2}\dt b^{\ast}(2u_{h}^{n}-u_{h}^{n-1},\frac{3}{2}{u}_{h}^{n+1} - u_{h}^{n} +\frac{1}{2}u_{h}^{n-1},{u}_{h}^{n+1} - 2u_{h}^{n} +u_{h}^{n-1})  
\\
&\leq \frac{3}{2} C_{b^{\ast}} \dt \| \nabla (2u_{h}^{n}-u_{h}^{n-1})\| \|\nabla(\frac{3}{2}{u}_{h}^{n+1} - u_{h}^{n} -\frac{1}{2}u_{h}^{n-1}) \|  \\
&\| \nabla ({u}_{h}^{n+1} - 2u_{h}^{n} -u_{h}^{n-1})\|^{1/2} \| ({u}_{h}^{n+1} - 2u_{h}^{n} -u_{h}^{n-1})\|^{1/2} \\
&\leq C \dt h^{-\frac{1}{2}}\| \nabla (2u_{h}^{n}-u_{h}^{n-1})\|\nabla(\frac{3}{2}{u}_{h}^{n+1} - u_{h}^{n} -\frac{1}{2}u_{h}^{n-1}) \| \| ({u}_{h}^{n+1} - 2u_{h}^{n} -u_{h}^{n-1})\|\\
&\leq C \frac{\dt^{2}}{h}\| \nabla (2u_{h}^{n}-u_{h}^{n-1})\|^{2}\| \nabla (\frac{3}{2}{u}_{h}^{n+1} - u_{h}^{n} -\frac{1}{2}u_{h}^{n-1})\|^{2} 
\\
&+ \frac{3}{4}\| u_{h}^{n+1}-2u_{h}^{n}+u_{h}^{n-1} \|^{2}.
\end{aligned}
\end{equation*}
Combining like terms we then have
\begin{gather*}
\frac{1}{4}(\|u^{n+1}_{h}\|^{2} + \| 2u_{h}^{n+1} - u_{h}^{n}\|^{2} + \|u_{h}^{n+1} - u_{h}^{n} \|^{2}) - 
\\
\frac{1}{4}(\|u_{h}^{n}\|^{2} + \|2u_{h}^{n} - u_{h}^{n-1}\|^{2} + \|u_{h}^{n} - u_{h}^{n-1}\|^{2} ) +
\\
 \frac{\dt\nu}{4 }\|\nabla (\frac{3}{2}{u}_{h}^{n+1} - u_{h}^{n} + \frac{1}{2}u_{h}^{n-1})\|^{2}+
\\
\frac{\nu \dt}{2}\left(1 - \frac{C\dt}{\nu h}\|\nabla(2u_{h}^{n} - u_{h}^{n-1})\|^{2}\right)\|\nabla (\frac{3}{2}{u}_{h}^{n+1} - u_{h}^{n} + \frac{1}{2}u_{h}^{n-1})\|^{2}
\\ \leq \frac{\dt}{\nu}\|f^{n+1}\|_{-1}^{2}.
\end{gather*}
Now, letting $C = C_{stab}$, using condition \eqref{stab_cond_2}, and summing from $n = 1$ to $N-1$ the result follows. 
\end{proof}	

\section{The VSVO algorithm\label{sec:vsvo}}
We now combine the methods analyzed in Sections \ref{sec:first_order} and \ref{sec:second_order} into a single VSVO method. Rather than discarding the intermediate first order approximation in Algorithm \ref{algo:VSS-filtere-BE-AB2}, it is kept so that we have two approximations to choose from. The first order method, which is provably energy stable for variable stepsizes, and second order method which has a smaller consistency error, and is at least provably energy stable for constant stepsizes.

\begin{algorithm}[Multiple order, one solve embedded - IMEX - 12 (MOOSE-IMEX-12)\label{alg:vsvo_themethod}]\hfill

\noindent
\begin{equation}
	\begin{aligned}
	\left(\frac{u_{h,1}^{n+1} - u_{h}^{n}}{\dt},v_h\right) + \nu(\nabla u^{n+1}_{h,1},\nabla v_h) + &b^{\ast}(E^{n+1}(u_h),E^{n+1}(u_h),v_h)  
	\\ - (p_{h}^{n+1},\nabla \cdot v_h)  &= (f^{n+1},v_{h}) \qquad \qquad \forall
	v_{h}\in X_{h}
	\\
	(\nabla \cdot u_{h,1}^{n+1},q_{h}) &= 0 \qquad \qquad \qquad \qquad \forall
	q_{h}\in Q_{h}.
	\end{aligned}
	\end{equation}

\noindent
\begin{equation}\notag
u_{h,2}^{n+1} = \hat{u}_{h,1}^{n+1} - \frac{\omega_{n}}{2\omega_{n}+1}(\hat{u}_{h,1}^{n+1} - E^{n+1}(u_h))
\end{equation}

\begin{equation}\notag
EST_1=u_{h,2}^{n+1}-u_{h,1}^{n+1}
\end{equation} 
\begin{gather}\label{eqn:estimator_second_order}
EST_2=\frac{\omega_{n-1} \omega_{n} (1+\omega_{n})}{1+2 \omega_{n}+\omega_{n-1} \left(1+4 \omega_{n}+3 \omega_{n}^2\right)}\bigg(u_{h,2}^{n+1} \notag
\\
-\frac{(1+\omega_{n}) (1+\omega_{n-1} (1+\omega_{n}))}{1+\omega_{n-1}}u_{h}^{n} +\omega_{n} (1+\omega_{n-1} (1+\omega_{n}))u_{h}^{n-1}\notag
 \\
 -\frac{\omega_{n-1}^2 \omega_{n} (1+\omega_{n})}{1+\omega_{n-1}}u_{h}^{n-2}\bigg). \notag
\end{gather}

If $\|EST_1\|<TOL$ or $\|EST_2\|<TOL$, at least one approximation is acceptable. Go to Step 5a. Otherwise, the step is rejected. Go to Case 2. 

\textbf{Case 1 : A solution is accepted.}
\begin{equation}\notag
\Delta t^{(1)} = \gamma\Delta t_n \left(\frac{TOL}{\|EST_1\|}\right)^{\frac{1}{2}}, \hspace{15mm}\Delta t^{(2)}= \gamma\Delta t_n \left(\frac{TOL}{\|EST_2\|}\right)^{\frac{1}{3}}.
\end{equation}

Set $$i=\argmax_{i\in \{1,2\}} \Delta t^{(i)}, \hspace{8mm}\Delta t_{n+1}=\Delta t^{(i)}, \hspace{8mm} t^{n+2}=t^{n+1}+\Delta t_{n+1}, \hspace{ 8mm} u_{h}^{n+1} = u_{h,i}^{n+1} .$$

If only $y^{(1)}$ (resp. $y^{(2)}$) satisfies $TOL$, set $\Delta t_{n+1}=\Delta t^{(1)}$ (resp. $\Delta t^{(2)}$), and $y^{n+1} = y_{(1)}^{n+1}$ (resp. $y_{(2)}^{n+1}$). Proceed to calculate $u_h^{n+2}$.

\noindent
\textbf{Case 2 : Both solutions are rejected.}

Set 
\begin{equation}\notag
\Delta t^{(1)} = \tilde{\gamma}\Delta t_n \left(\frac{TOL}{\|EST_1\|}\right)^{\frac{1}{2}}, \hspace{15mm}\Delta t^{(2)}= \tilde{\gamma}\Delta t_n \left(\frac{TOL}{\|EST_2\|}\right)^{\frac{1}{3}}.
\end{equation}

Set $$i=\argmax_{i\in \{1,2\}} \Delta t^{(i)}, \hspace{10mm}\Delta t_{n}=\Delta t^{(i)}, \hspace{10mm} t^{n+1}=t^{n}+\Delta t_{n}$$

Recalculate $u_{h,1}^{n+1}$ and $u_{h,2}^{n+1}$.

\end{algorithm}

The numbers $\gamma$ and $\tilde{\gamma}$ are heuristic safety factors. $\gamma = 0.9$ is a commonly chosen value for adaptive codes. We use the same choices as the implicit version used in \cite{DLZ18}, which were $\gamma = 0.9$, and $\tilde{\gamma} = 0.7$. Case 2 can optionally be replaced with a simpler heuristic where $\Delta t$ is halved.
 
The error estimator $EST_2$ effectively turns MOOSE-IMEX-12 into a three step method, increasing memory complexity. If low storage is important, an alternate error estimator similar to one used in MOOSE234 in \cite{decaria2018new} is more suitable. It is obtained by solving
\begin{gather*}
(EST_2, v_h) = \frac{1}{\Delta t_n}\left(\frac{1+2\omega_n}{1+\omega_n}u_h^{n+1,2} - (1+\omega_n)u_h^{n} + \frac{\omega_n^2}{1+\omega_n}u_h^{n-1},v_h\right)\\
+ \nu (\nabla u_h^{n+1,2},\nabla v_h)+b^*(u_h^{n+1,2},u_h^{n+1,2},v^h)\\ - (p_h^{n+1},\nabla \cdot v_h) - (f(t^{n+1}),v_h) 
 \hspace{10mm} \forall v_h \in X_h
\end{gather*}

$EST_2$ is the residual of the VS-BE-AB2+F solution plugged into the VSS-BDF2 equation. It only requires a mass matrix solve with an $\mathcal{O}(1)$ condition number, which may be solved efficiently with many iterative methods. This version makes MOOSE-IMEX-12 have the same memory complexity as BE-AB2, with slightly increased floating point operations per step.

\section{Numerical Experiments\label{sec:tests}}

We now test the methods on two different problems with known exact solutions to verify the predicted convergence rates. We first test convergence of the constant stepsize, constant order methods in Section \ref{sec:test_nonadaptive} on the well known 2D Taylor-Green vortex problem. Next, we test both adaptive and nonaptive methods on a modified Taylor-Green problem with periodic, rapid transients in Section \ref{sec:test_nonadaptive}. We demonstrate that the new adaptive methods are more efficient than their nonadaptive counterparts.

We now recall the naming conventions for the various methods that we test. BE-AB2+F is BE-AB2 post-processed by the time filter. MOOSE-IMEX-12 refers to Algorithm \ref{alg:vsvo_themethod}. We specify if a method is constant order, constant stepsize by ``nonadaptive''. VSS BE-AB2 still computes $EST_1$ and $\Delta t^{(1)}$ as in Algorithm \ref{alg:vsvo_themethod}, and always uses the first order solution to advance in time. Another way to view VSS BE-AB2 is as a variant of Algorithm \ref{alg:vsvo_themethod} where $EST_2 := \infty$, so that effectively only the first order solution is considered. VSS BE-AB2+F is defined analogously, and can be seen as Algorithm \ref{alg:vsvo_themethod} with $EST_1 := \infty$, so that only the second order solution is used.

For all adaptive methods, we imposed a stepsize ratio limiter, which is a common heuristic. The stepsizes are limited to at most doubling each timestep, and cannot be less than half of the previous attempted $\Delta t$. However, the algorithms may reject several solutions in a row, effectively allowing $\Delta t$ to shrink as small as necessary.

All errors are calculated in the relative $\ell^2(0,T;L^2(\Omega))$ norm,
\begin{equation}\notag
\|u_h-u\|_{\ell^2(0,T;L^2(\Omega))} = \sqrt{\frac{\sum_n {\Delta t_n}\|u_h(t^{n+1})-u(t^{n+1})\|^2}{\sum_n {\Delta t_n}\|u(t^{n+1})\|^2}}.
\end{equation}

\noindent
All tests were performed with the FEniCS project, \cite{fenics}, and our code is available online\footnote{All code and data are available at https://github.com/vpdecaria/beab2}.
\subsection{Accuracy of the nonadaptive method\label{sec:test_nonadaptive}}
We first test the accuracy of the new constant order, constant stepsize methods BE-AB2 and BE-AB2+F. We also compare these methods with the standard BE-FE method. This is done with the decaying Taylor-Green vortex with $f\equiv 0$. In 2D exact solutions are  known and this problem serves as a standard benchmark problem \cite{B05}. The exact solution is given by
\begin{equation}\notag
u = \exp(-2\nu t)\langle\cos x \sin y, -\sin x \cos y\rangle, \hspace{5mm} p = -\frac{1}{4}\exp(-4\nu t)\langle\cos 2x + \cos 2y\rangle. 
\end{equation}
The domain was taken to be the $2\pi$ periodic square, and was meshed with a standard uniform triangulation with 50 triangle edges per side of the square. The elements used were Taylor-Hood, cubic velocities, and quadratic pressures, which are known to satisfy the discrete inf-sup condition. The problem was run till a final time of $T=1$, with $\nu=1$.

The results shown in Figure \ref{fig:const_step_converge} confirm the predicted convergence rates. Interestingly, BE-FE and BE-AB2 produce nearly identical velocity errors, but BE-AB2 has a much improved pressure error. 

\begin{figure}
\centering
\begin{subfigure}{\textwidth}
\includegraphics[width=.495\linewidth]{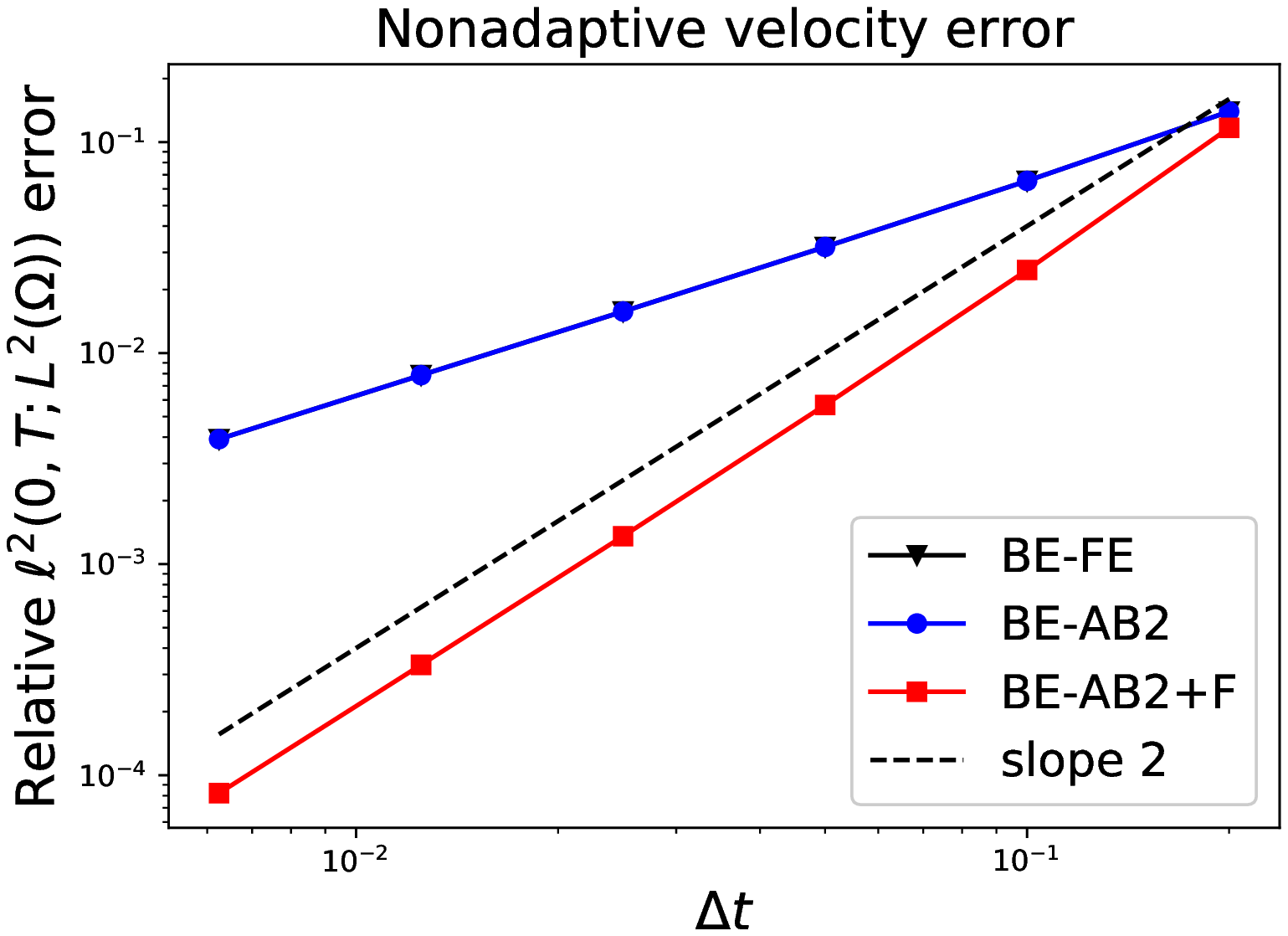}
\includegraphics[width=.495\linewidth]{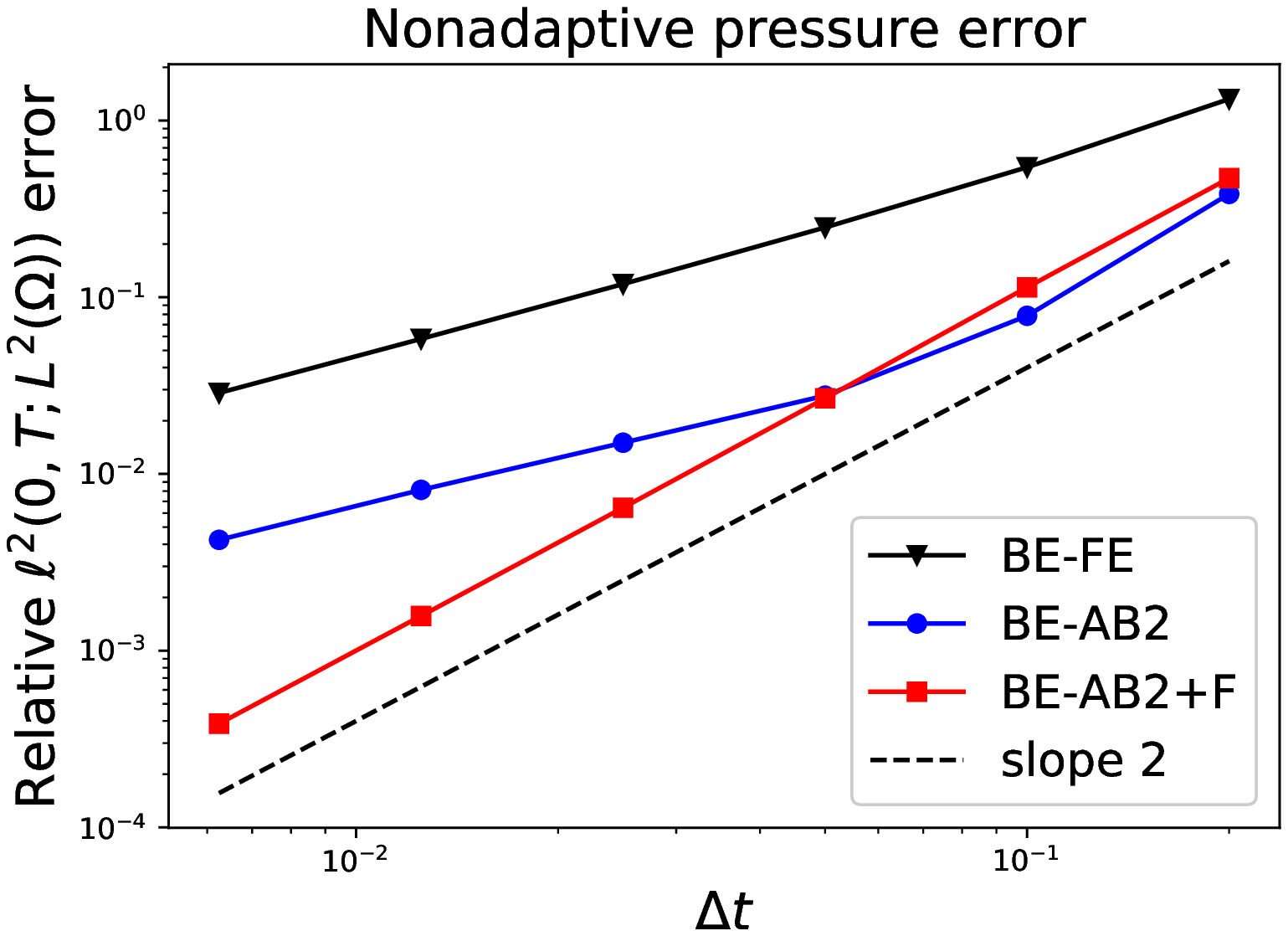}
\end{subfigure}
\caption{The velocity and pressure errors converge at the predicted rates.\label{fig:const_step_converge}}
\end{figure}

\subsection{Accuracy of the fully adaptive, VSVO method\label{sec:test_adaptive}}
Now we test the accuracy and robustness of the adaptive methods on a problem with a fast and slow time scale, which demonstrates the superiority of the adaptive methods in this case. This is a modification of the Taylor-Green vortex problem with a nonautonomous body force that causes periodic, rapid transients. This test was performed in \cite{DLZ18}, and is described here for completeness.

Let $F = F(t)$ be differentiable. For the following body force,
\begin{equation}\notag
f(x,y,t) = (2\nu F(t) + F'(t))\langle \cos x \sin y, -\cos y \sin x \rangle,
\end{equation}
an exact solution is given by
\begin{equation}\notag
u = F(t)\langle\cos x \sin y, -\sin x \cos y\rangle, \hspace{5mm} p = -\frac{1}{4}F(t)^2\langle\cos 2x + \cos 2y\rangle. 
\end{equation}
Note that setting $F(t) = \exp(- 2\nu t)$ recovers the standard Taylor-Green test from Section \ref{sec:test_nonadaptive}.
Consider the following smooth transition function from zero to one,
\begin{equation}\notag
g(t) = 
\begin{cases}
0 & t\leq 0\\
\exp\left(-\frac{1}{(10t)^{10}}\right) & t>0.
\end{cases}
\end{equation}
This function rapidly approaches one to machine precision, and we construct a periodic $F$ with shifts and translations of $g$, the effect of which is seen in Figure \ref{fig:norms}. 

We tested convergence for the adaptive methods as follows. For five tolerances, \\$\varepsilon = \{10^{-2},10^{-3},10^{-4},10^{-5},10^{-6}\}$, we computed the discrete solutions for BE-AB2, BE-AB2+F, and MOOSE-IMEX-12. We then compare the relative error versus the number of Stokes solves required to complete the simulation, since this is the dominant cost for the methods. Counting total solves is more fair than counting the average $\Delta t$ since adaptive methods reject solutions that do not satisfy the tolerance, which results in additional Stokes solves to recompute the solution with a new $\Delta t_n$. Therefore, the formula for adaptive methods is $$\text{Stokes solves} = \text{accepted solutions + rejected solutions.}$$

Figure \ref{fig:adaptive_error} shows the velocity and pressure errors for both adaptive BE-AB2 and MOOSE-IMEX-12. Even though we include the work of the rejected solves, we still see the predicted convergence rates. Not shown is VSS BE-AB2+F which performed similarly to the full MOOSE-IMEX-12 method.

Next, we show that time adaptivity is needed for this problem. Using the total number of Stokes solves required by the adaptive method for each tolerance, we calculate the effective stepsize as $\Delta t = T/(\text{Stokes solves})$. We then run BE-AB2+F with this fixed stepsize, and compare the error with MOOSE-IMEX-12. The results, shown in Figure \ref{fig:adaptive_vs_nonadaptive}, clearly show that adaptivity is required to solve this problem efficiently. In some cases, MOOSE-IMEX-12 is three orders of magnitude better.

In Figure \ref{fig:norms}, we plot the norms of $u$ and $p$ for the case where both MOOSE-IMEX-12 and BE-AB2+F perform 221 Stokes solves, which corresponds to a tolerance of $\varepsilon = 10^{-2}$. We see that while MOOSE-IMEX-12 essentially captures the transitions, nonadaptive BE-AB2+F exhibits large fluctuations.

Although we currently lack a proof for VSS BE-AB2+F and MOOSE-IMEX-12, our tests indicate both convergence and stability.
\begin{figure}
\centering
\begin{subfigure}{\textwidth}
\includegraphics[width=.495\linewidth]{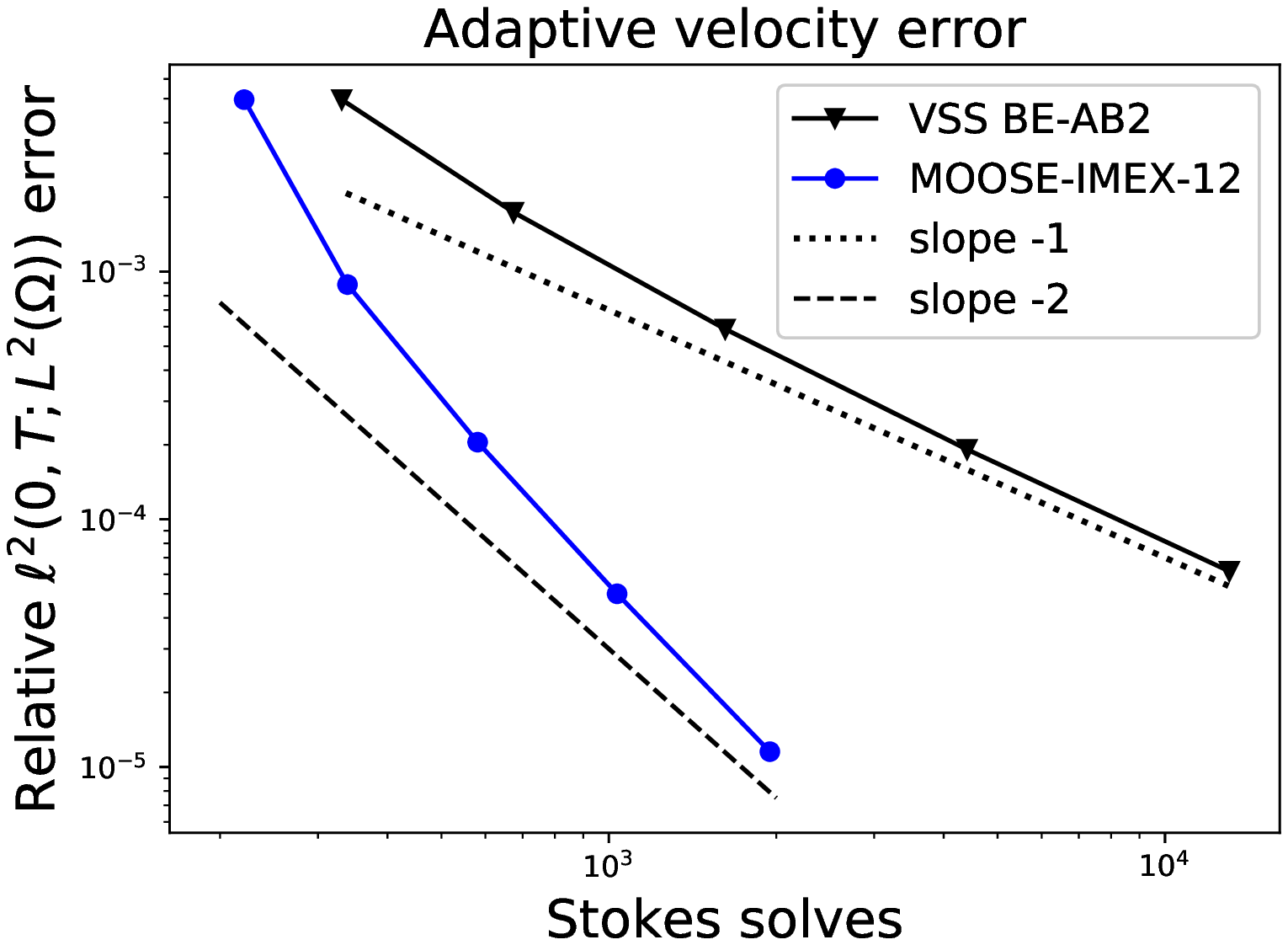}
\includegraphics[width=.495\linewidth]{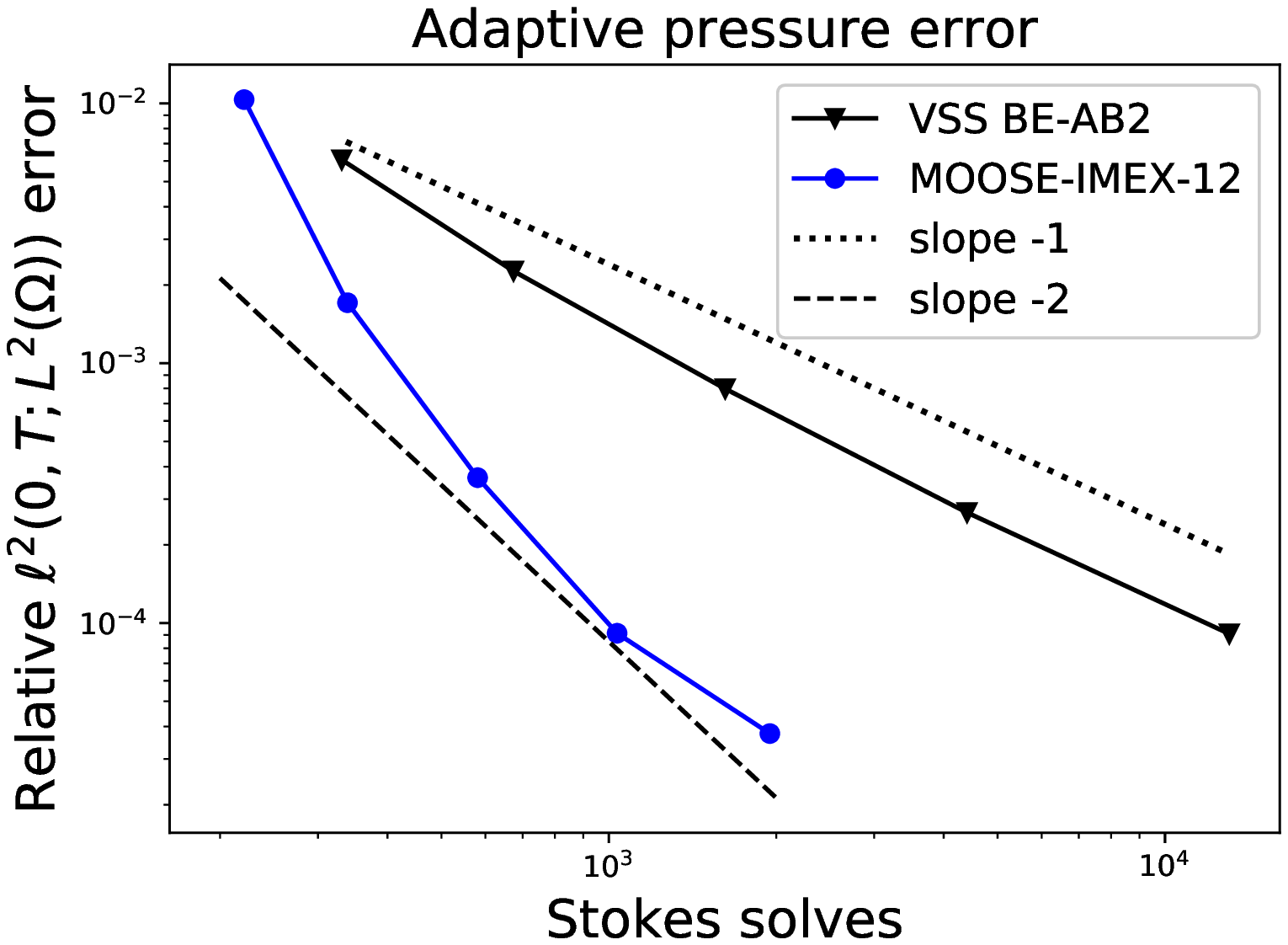}
\end{subfigure}
\caption{The velocity and pressure errors converge at the predicted rates.\label{fig:adaptive_error}}
\end{figure}

\begin{figure}
\centering
\begin{subfigure}{\textwidth}
\includegraphics[width=.495\linewidth]{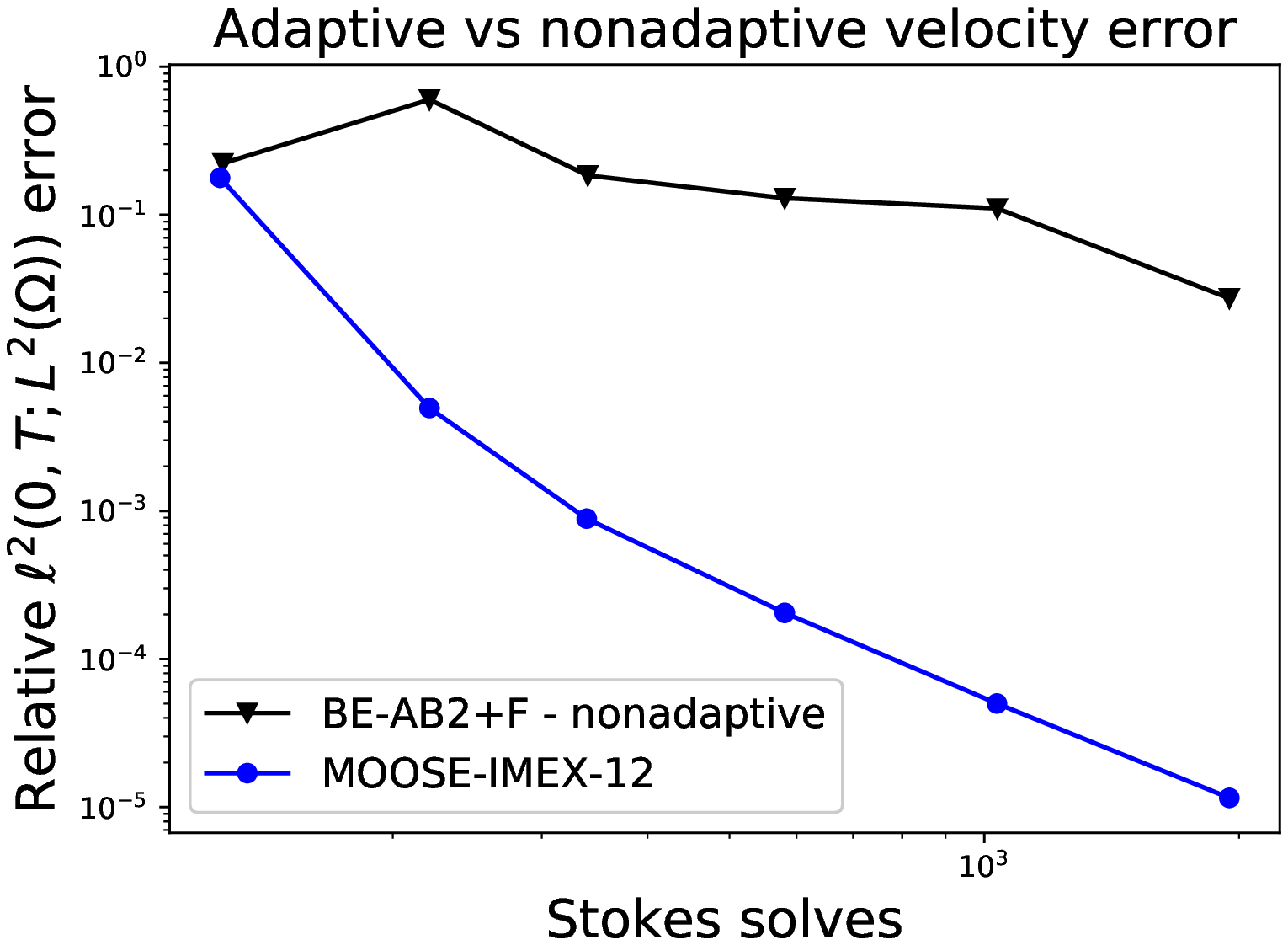}
\includegraphics[width=.495\linewidth]{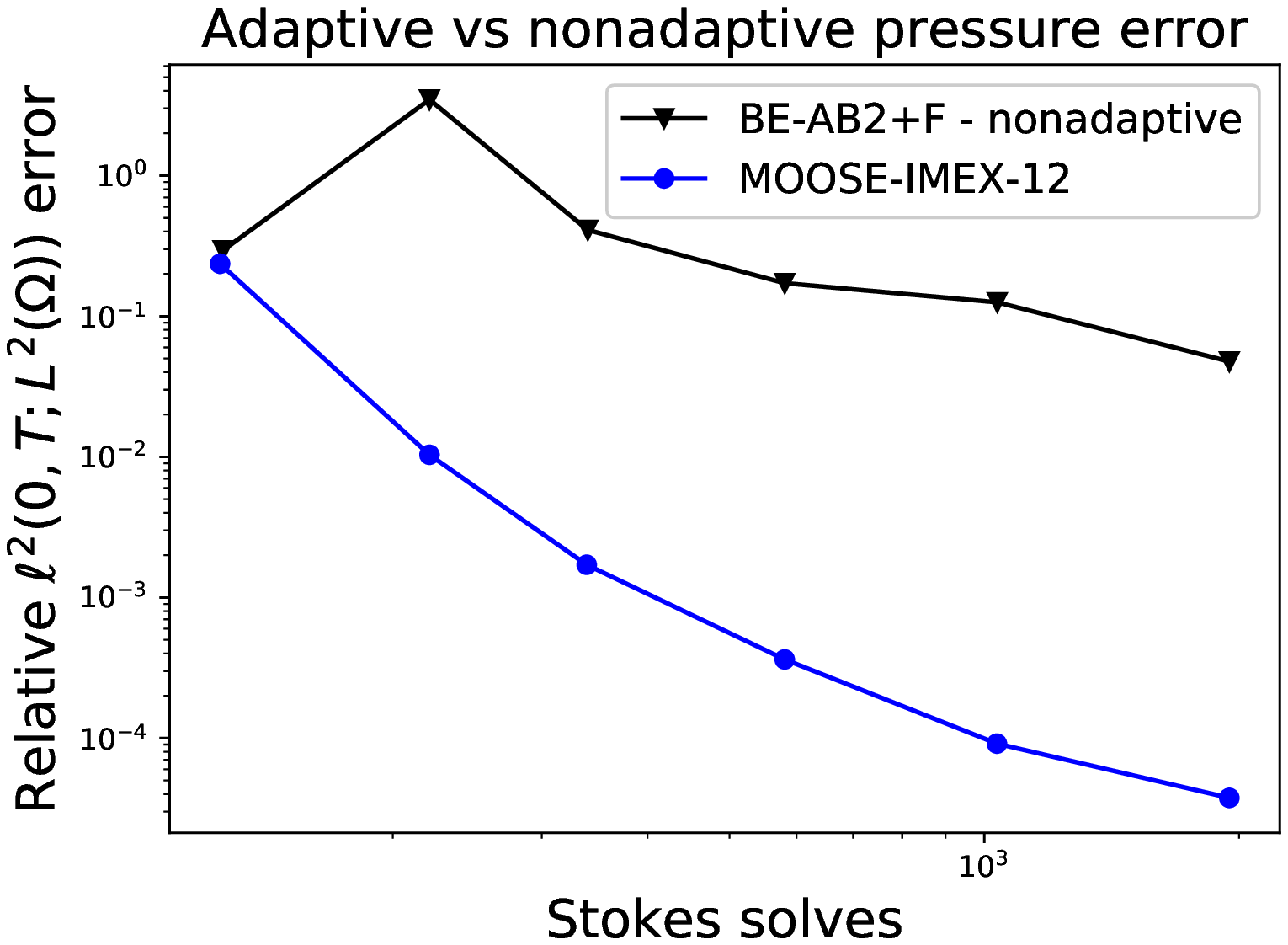}
\end{subfigure}
\caption{While both methods are second order accurate, the adaptive method is orders of magnitude better for the same number of Stokes solves for this test problem.\label{fig:adaptive_vs_nonadaptive}}
\end{figure}

\begin{figure}
\centering
\begin{subfigure}{\textwidth}
\includegraphics[width=.495\linewidth]{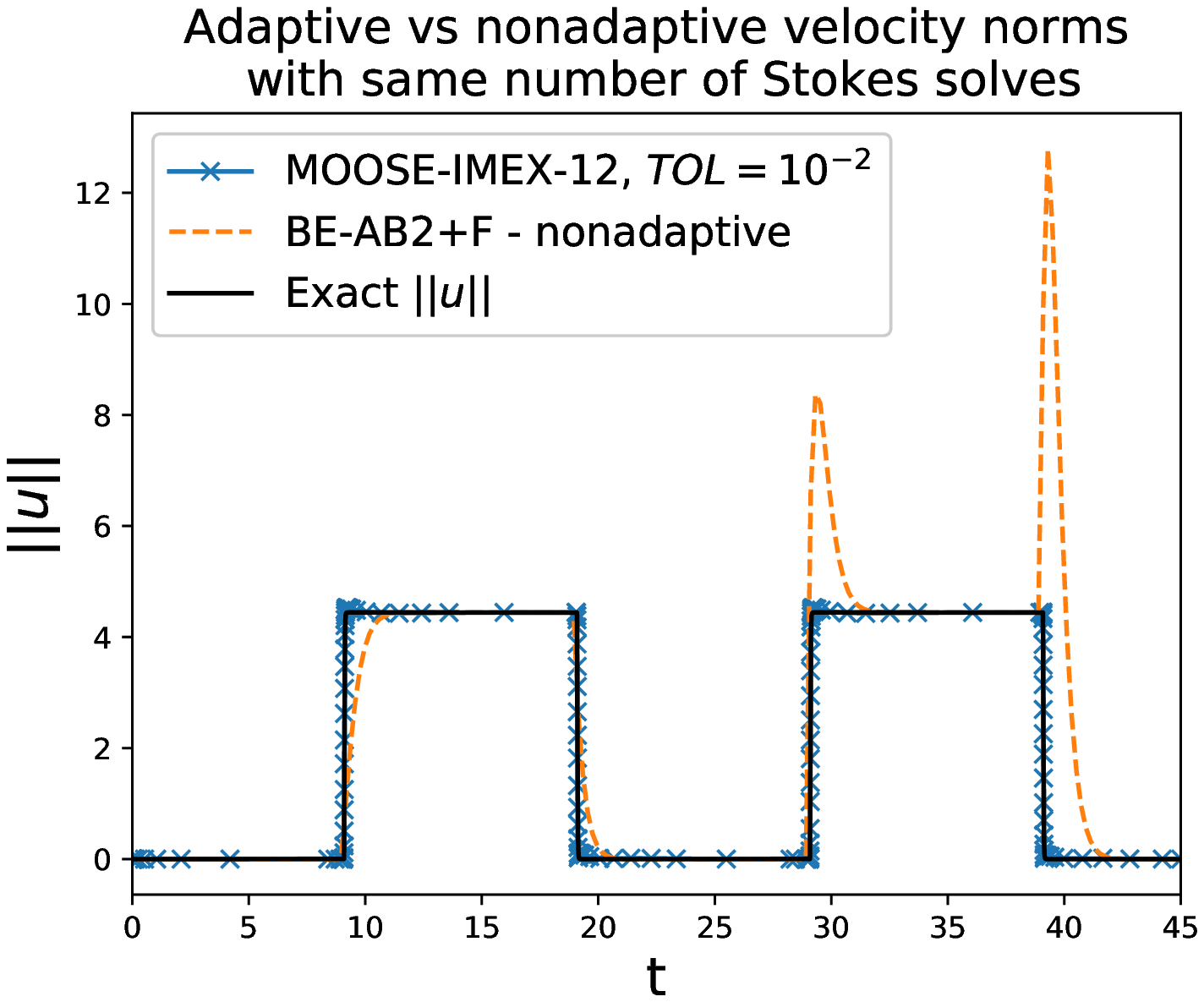}
\includegraphics[width=.495\linewidth]{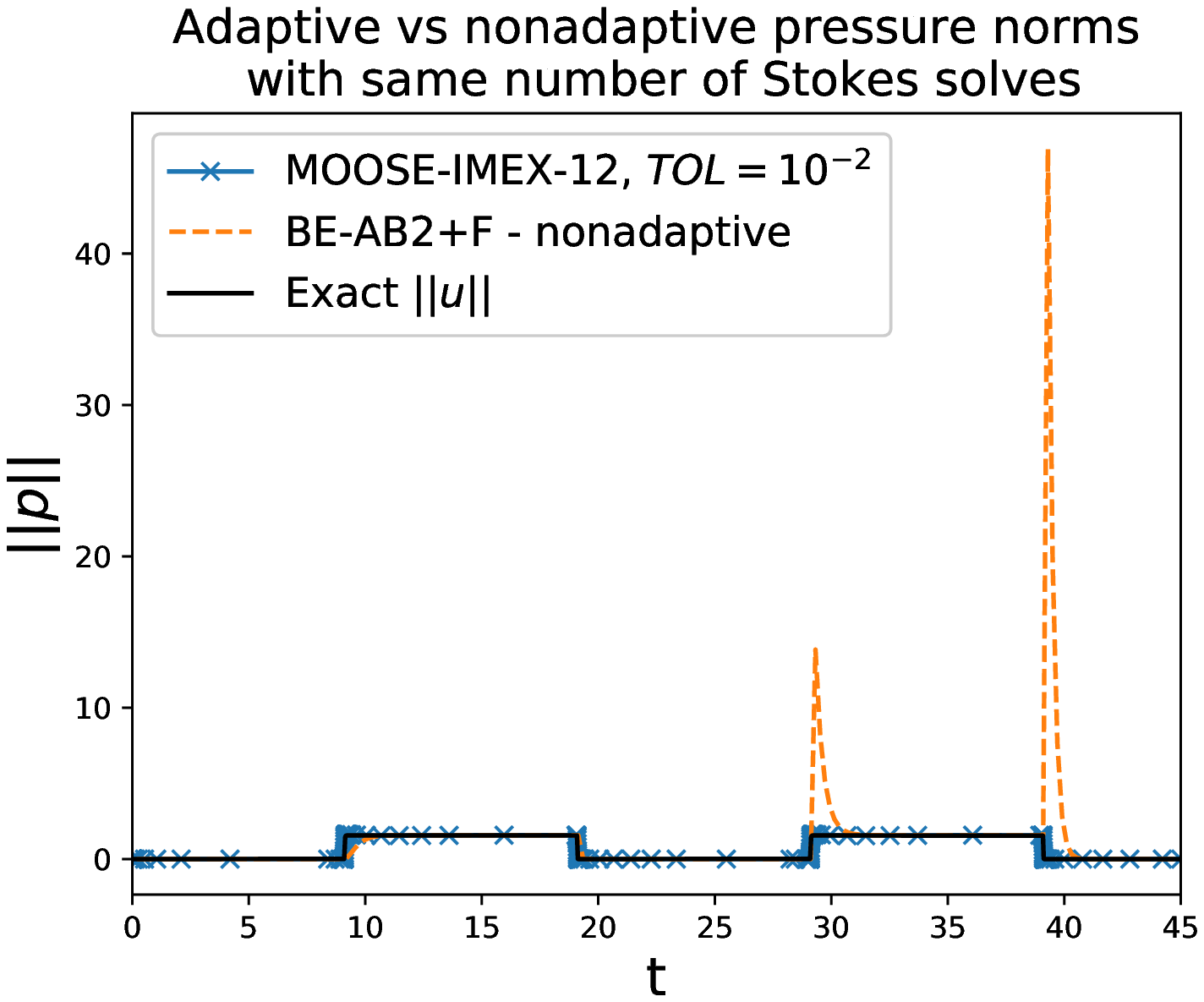}
\end{subfigure}
\caption{For the same number of Stokes solves, the nonadaptive method exhibits overshooting, while the adaptive method resolves transitions.\label{fig:norms}}
\end{figure}

\section{Conclusion\label{sec:conclusion}}
We introduced and analyzed a new variable stepsize IMEX scheme for solving the NSE, BE-AB2. We proved nonlinear energy stability for the variable stepsize method under a timestep and stepsize ratio condition, and without a small data assumption. We are not aware of other proofs of this nature for adaptive, two-step methods for NSE with explicit treatment of the nonlinearity. We then included a full error analysis for the method.

We extended this method to an embedded IMEX pair of orders one (BE-AB2) and two (BE-AB2+F) that requires no additional Stokes solves, and is easy to implement. We prove nonlinear energy stability of constant stepsize BE-AB2+F under a timestep condition. This pair is combined to construct a new variable stepsize, variable order IMEX method for NSE of orders one and two that only requires one Stokes solve per timestep, which we tested herein. We aren't aware of any other such methods.

Future work will consist of higher order extensions of the  MOOSE-IMEX-12 scheme . Based on the methods in \cite{decaria2018new} there appears to a path forward to doing so. Additionally, we will explore the error and stability analysis of the variable stepsize BE-AB2+F method. 

\bibliographystyle{siam}
\bibliography{WorksCited_arxiv}

\begin{thebibliography}{10}

\bibitem{fenics}
{\sc M.~Aln{\ae}s, J.~Blechta, J.~Hake, A.~Johansson, B.~Kehlet, A.~Logg,
  C.~Richardson, J.~Ring, M.~Rognes, and G.~Wells}, {\em The {FEniCS} project
  version 1.5}, Archive of Numerical Software, 3 (2015).

\bibitem{BDK82}
{\sc G.~Baker, V.~Dougalis, and O.~Karkashian}, {\em On a higher order accurate
  fully discrete {G}alerkin approximation to the {N}avier-{S}tokes equations},
  Math. Comp, 39 (1982), pp.~339--375.

\bibitem{B98}
{\sc J.~Becker}, {\em A second order backward difference method with variable
  steps for a parabolic problem}, BIT Numerical Mathematics, 38 (1998),
  pp.~644--662.

\bibitem{B05}
{\sc L.~C. Berselli}, {\em On the large eddy simulation of the
  {T}aylor--{G}reen vortex}, Journal of Mathematical Fluid Mechanics, 7 (2005),
  pp.~S164--S191.

\bibitem{BR12}
{\sc M.~Besier and R.~Rannacher}, {\em Goal-oriented space–time adaptivity in
  the finite element {G}alerkin method for the computation of nonstationary
  incompressible flow}, International Journal for Numerical Methods in Fluids,
  70 (2012), pp.~1139--1166.

\bibitem{WXYZ19}
{\sc W.~Chen, X.~Wang, Y.~Yan, and Z.~Zhang}, {\em A second order {BDF}
  numerical scheme with variable steps for the {C}ahn--{H}illiard equation},
  SIAM Journal on Numerical Analysis, 57 (2019), pp.~495--525.

\bibitem{CL84}
{\sc M.~Crouzeix and F.~Lisbona}, {\em The convergence of variable-stepsize,
  variable-formula, multistep methods}, SIAM Journal on Numerical Analysis, 21
  (1984), pp.~512--534.

\bibitem{DLN83}
{\sc G.~G. Dahlquist, W.~Liniger, and O.~Nevanlinna}, {\em Stability of
  two-step methods for variable integration steps}, SIAM Journal on Numerical
  Analysis, 20 (1983), pp.~1071--1085.

\bibitem{decaria2018new}
{\sc V.~DeCaria, A.~Guzel, W.~Layton, and Y.~Li}, {\em A new embedded variable
  stepsize, variable order family of low computational complexity}, arXiv,
  (2018).

\bibitem{DLZ18}
{\sc V.~DeCaria, W.~Layton, and H.~Zhao}, {\em A time-accurate, adaptive
  discretization for fluid flow problems}, arXiv,  (2018).

\bibitem{GR79}
{\sc V.~Girault and P.~A. Raviart}, {\em Finite element approximation of the
  {N}avier-{S}tokes equations}, vol.~749 of Lecture Notes in Mathematics,
  Springer-Verlag, Berlin, 1979.

\bibitem{Max89}
{\sc M.~D. Gunzburger}, {\em Finite Element Methods for Viscous Incompressible
  Flows: A guide to theory, practice, and algorithms}, Elsevier, 2012.

\bibitem{guzel}
{\sc A.~Guzel and W.~Layton}, {\em Time filters increase accuracy of the fully
  implicit method}, BIT Numerical Mathematics, 58 (2018), pp.~301--315.

\bibitem{Y08}
{\sc Y.~He}, {\em The {E}uler implicit/explicit scheme for the 2d
  time-dependent {N}avier-{S}tokes equations with smooth or non-smooth initial
  data}, Mathematics of Computation, 77 (2008), pp.~2097--2124.

\bibitem{JR10}
{\sc V.~John and J.~Rang}, {\em Adaptive time step control for the
  incompressible {N}avier-{S}tokes equations}, Computer Methods in Applied
  Mechanics and Engineering, 199 (2010), pp.~514 -- 524.

\bibitem{JL04}
{\sc H.~Johnston and J.~Liu}, {\em Accurate, stable and efficient
  {N}avier-{S}tokes solvers based on explicit treatment of the pressure term},
  Journal of Computational Physics, 199 (2004), pp.~221 -- 259.

\bibitem{KGGS10}
{\sc D.~Kay, P.~Gresho, D.~Griffiths, and D.~Silvester}, {\em Adaptive
  time-stepping for incompressible flow part ii: {N}avier--{S}tokes equations},
  SIAM J. Scientific Computing, 32 (2010), pp.~111--128.

\bibitem{layton2020analysis}
{\sc W.~Layton, W.~Pei, Y.~Qin, and C.~Trenchea}, {\em Analysis of the variable
  step method of {D}ahlquist, {L}iniger and {N}evanlinna for fluid flow},
  arXiv,  (2020).

\bibitem{LC12}
{\sc W.~Layton and C.~Trenchea}, {\em {Stability of two IMEX methods, CNLF and
  BDF2-AB2, for uncoupling systems of evolution equations}}, Applied Numerical
  Mathematics, 62 (2012), pp.~112 -- 120.

\bibitem{Layton08}
{\sc W.~J. Layton}, {\em Introduction to the numerical analysis of
  incompressible viscous flows}, vol.~6, Society for Industrial and Applied
  Mathematics (SIAM), 2008.

\bibitem{MT98}
{\sc M.~Marion and R.~Temam}, {\em {N}avier-{S}tokes equations: Theory and
  approximation}, in Numerical Methods for Solids (Part 3) Numerical Methods
  for Fluids (Part 1), vol.~6 of Handbook of Numerical Analysis, Elsevier,
  1998, pp.~503 -- 689.

\bibitem{Z14}
{\sc L.~Zheng, H.~Zhang, T.~Gerya, M.~Knepley, D.~A. Yuen, and Y.~Shi}, {\em
  Implementation of a multigrid solver on a {GPU} for {S}tokes equations with
  strongly variable viscosity based on {M}atlab and {CUDA}}, Int. J. High
  Perform. Comput. Appl., 28 (2014), pp.~50--60.

\end{thebibliography}
\end{document}